\documentclass[11pt]{amsart}

\usepackage[utf8]{inputenc}
\usepackage{palatino}


\usepackage{amsthm,amssymb,amsfonts,amsmath,enumerate,graphicx,color}
\usepackage{mathrsfs}
\usepackage[margin=1in]{geometry}
\usepackage{setspace}

\usepackage{tikz,tikz-3dplot}

\usepackage{pgfplots}

\usepackage{mathtools}

\definecolor{cof}{RGB}{219,144,71}
\definecolor{pur}{RGB}{186,146,162}
\definecolor{greeo}{RGB}{91,173,69}
\definecolor{greet}{RGB}{52,111,72}

\tdplotsetmaincoords{70}{120}


\newcommand{\Rc}{\mathcal{R}}

\newcommand{\Nc}{\mathcal{N}}%

\newcommand{\Fc}{\mathcal{F}}
\newcommand{\Bc}{\mathcal{B}}
\newcommand{\dpt}{\operatorname{dp}}
\newcommand{\depth}{\operatorname{depth}}

\newcommand\Def[1]{\textbf{#1}}
 

\newcommand{\Z}{\mathbb{Z}}
\newcommand{\N}{\mathbb{N}}
\newcommand{\R}{\mathbb{R}}

\newcommand{\Des}{\mathrm{Des}}
\newcommand{\maj}{\mathrm{maj}}
\newcommand{\des}{\mathrm{des}}

\newcommand{\conv}{\mathrm{conv}}

\renewcommand{\phi}{\varphi}
\renewcommand{\emptyset}{\varnothing}

\def\x{{\boldsymbol x}}

\newcommand\commentout[1]{}

\newtheorem{theorem}{Theorem}[section]
\newtheorem{corollary}[theorem]{Corollary}
\newtheorem{proposition}[theorem]{Proposition}

\theoremstyle{remark}

\newtheorem{remark}[theorem]{Remark}
\theoremstyle{definition}
\newtheorem{definition}[theorem]{Definition}


\begin{document}
\title{Multivariate polynomials for generalized permutohedra}
\author{Eric Katz}
\address{Department of Mathematics\\
         The Ohio State University\\
        Columbus, OH 43210}
\email{katz.60@osu.edu}        
\author{McCabe Olsen}
\address{Department of Mathematics\\
         The Ohio State University\\
        Columbus, OH 43210}
\email{olsen.149@osu.edu}
\date{\today}

\begin{abstract}
Using the notion of Mahonian statistic on acyclic posets, we introduce a $q$-analogue of the $h$-polynomial of a simple generalized permutohedron. 
We focus primarily on the case of nestohedra and on explicit computations for many interesting examples, such as $S_n$-invariant nestohedra, graph associahedra, and Stanley--Pitman polytopes.
For the usual (Stasheff) associahedron, our generalization yields an alternative $q$-analogue to the well-studied Narayana numbers.  
\end{abstract}

\thanks{The authors thank Vic Reiner for helpful comments. 
The first author was partially supported by NSF DMS 1748837.
}

\keywords{Generalized permutohedron, $h$-polynomial, $q$-analogues}

\subjclass[2010]{52B12, 05A15, 06A07, 05C31}


\maketitle


\section{Introduction}

Given any combinatorially defined polynomial, a common theme in enumerative combinatorics is to consider multivariate analogues which further stratify and enrich the encoded data by an additional combinatorial statistic.
A notable example of is the \emph{Euler--Mahonian polynomial}
	\[
	A_n(t,q)=\sum_{\pi\in S_n}t^{\des(\pi)}q^{\maj(\pi)}
	\]
which is a bivariate generalization of the more foundational \emph{Eulerian polynomial}
	\[
	A_n(t)=\sum_{\pi\in S_n}t^{\des(\pi)}.
	\]
In this case, the we further startify the descent statistic on permutations by the additional data of the major index.
Such a generalization is commonly referred to as a $q$-analogue in reference to usual choice of added variable.

Given a convex polytope $P\subset\R^n$, the $h$-polynomial is an encoding of the face numbers of $P$ obtained as a linear change of variables of the generating function for the face numbers. If $P$ is simple or simplicial, then the Dehn--Sommerville equations for $P$ are reflected in the palindromicity of the $h$-polynomial. For simple rational polytopes, the $h$-polynomial is the Poincar\'{e} polynomial of the cohomology groups of the toric variety attached to the polytope. Moreover, for simplicial polytopes, the $h$-polynomial is the generating function for facets of $P$ according to the size of their restriction sets  {\cite[Sec. 8.3]{Ziegler-Book}.

Generalized permutohedra are a broad class of convex polytopes which exhibit many nice properties.
First introduced by Postnikov \cite{Postnikov-Beyond}, these polytopes have been the subject of much study and are of wide interest in many areas of algebraic and enumerative combinatorics, including the combinatorics of Coxeter groups, cluster algebras, combinatorial Hopf algebras and monoids, and polyhedral geometry (see, e.g., \cite{AguiarArdila,Armstrong,FeichtnerSturmfels,FominReading}).

Of particular interest for our purposes, Postikov, Reiner, and Williams \cite{PostnikovReinerWilliams} give a combinatorial description of the $h$-polynomial for any simple generalized permutohedron using an Eulerian descent statistic on posets. 
Moreover, they provide a formula for well-behaved, special cases of generalized permutohedra.
We give a bivariate generalization of their description for any simple generalized permutohedron:
for $P$ be a simple generalized permutahedron and $Q_\sigma$ the poset for a full dimensional cone $\sigma$ in the normal fan $\Nc(P)$, we define
	\[
	h_P(t,q)\coloneqq \sum_{\sigma\in\Nc(P)}t^{\des(Q_\sigma)}q^{\maj(Q_\sigma)}
	\]
where $\des$ and $\maj$ are statistics defined below.
Furthermore, we are able to be more explicit when restricting to particular classes of generalized permutahedra, specifically $S_n$-invariant nestohedra, graph associahedra, and Stanley--Pitman polytopes.
 
Our definition of the bivariate $h$-polynomial, which specializes to the usual $h$-polynomial is justified by analogy with the Euler--Mahonian polynomial. Other possible definitions exist. An inequivalent definition is the principal specialization of the Frobenius characteristic of the permutohedral toric variety. This definition does not extend to generalized permutohedra and is not discussed in the body of the paper. However, it does make use of the major index.

The structure of this note is as follows. 
In Section~\ref{sec:background}, we provide a review of necessary background and terminology on permutations, posets, polyhedral geometry, and generalized permutohedra. 
Section~\ref{sec:general} defines and discusses the the $q$-analogue for the $h$-polynomial of any simple generalized permutohedron.
In Section~\ref{sec:Nestohedra}, we focus on general results for a large class of simple generalized permutohedra called \emph{nestohedra}, including a palidromicity result for special cases. 
Section~\ref{sec:Examples} is devoted to several explict examples, including $S_n$-invariant nestohedra, graph associahedra, the classical associahedron, the stellohedron, and the Stanley--Pitman polytope. 
These examples produce some \emph{alternative} $q$-analogues of some well-known combinatorial sequences, including the Narayana numbers. 

\section{Background}
\label{sec:background}
In this section, we provide a brief review of basic properties of permutations statistics, posets, polytopes and normal fans, and generalized permutohedra.

\subsection{Permutation statistics}
Let $A=\{a_1<a_2<\ldots<a_n\}$ be a set of $n$ elements.
The \Def{symmetric group on $A$}, denoted $S_A$, is the set of all permutations of the elements of $A$.
In the case of $A=[n]$, we will simply write $S_n$.
Given $\pi=\pi_1\pi_2\cdots\pi_n\in S_A$, the \Def{descent set} of $\pi$ is 
	\[
	\Des(\pi)=\{i\in[n-1] \ : \ \pi_i>\pi_{i+1}\},
	\] 	
the \Def{descent number} of $\pi$ is $\des(\pi)=|\Des(\pi)|$, and the \Def{major index} of $\pi$ is 
	\[
	\maj(\pi)=\sum_{i\in\Des(\pi)}i.
	\]

The descent statistic is commonly referred to as an \emph{Eulerian} statistic, due to the connection to polynomial first studied by Euler \cite{Euler}.
The \Def{Eulerian polynomial}  $A_n(t)$ is the unique polynomial which satisfies 
	\[
	\sum_{k\geq 0} (k+1)^n t^k = \frac{A_n(t)}{(1-t)^{n+1}}
	\]
However, this polynomial can be interpreted entirely combinatorially as
	\[
	A_n(t)=\sum_{\pi\in S_n}t^{\des(\pi)}.
	\]	
The major index, on the other hand, is commonly known as a \emph{Mahonian statistic}, as it was introduced by MacMahon \cite{MacMahon}. 
The descent statistic and major index statistic are naturally linked as they both encode information regarding the descent set of a permutation. 
Thus, it is fruitful to consider the joint distribution of these statistics, which motivates the \Def{Euler--Mahonian polynomial}
	\[
	A_n(t,q)=\sum_{\pi\in S_n} t^{\des(\pi)} q^{\maj(\pi)},
	\]
which specializes to the Eulerian polynomial under the substitution $q=1$.	
This polynomial and various generalizations are widely of interest (see, e.g., \cite{AdinBrentiRoichman-Hyper,BagnoBiagioli,BeckBraun-EM,BraunOlsen-EulerMahonianSemigroup}). 

\subsection{Posets}
Let $Q$ be a partially ordered set (poset) on $[n]$ with relation $<_Q$. 
Given $x,y\in Q$, let $x\lessdot_Q y$ denote the covering relation.
Two elements $x,y\in Q$ are \Def{incomparable} if we have neither $x<_Q y$ nor $y<_Q x$.
A \Def{chain} in $Q$, is a collection of elements $x_1,x_2,\ldots,x_k\in Q$ such that $x_1<_Q\cdots <_Q x_k$.
A chain $x_1,x_2,\ldots,x_k\in Q$ is called \Def{saturated} if $x_1\lessdot_Q\cdots \lessdot_Q x_k$.
The \Def{Hasse diagram} of $Q$ is graph with an oriented upwards direction such that edges encode covering relations.
We say that $Q$ is \Def{acyclic} if for all $x,y\in[n]$ with  $x<_Q y$ there is a unique saturated chain from $x$ to $y$. 

Given two posets $Q_1$ and $Q_2$, the \Def{ordinal sum} $Q_1\oplus Q_2$ is the poset on the disjoint union of the ground sets of $Q_1$ and $Q_2$ such that $x<y$ if  (i) $x,y\in Q_1$ and $x<_{Q_1}y$, (ii) $x,y\in Q_2$ and $x<_{Q_2} y$, or (iii) $x\in Q_1$ and $y\in Q_2$.

The poset $Q$ is called \Def{graded} (or \Def{ranked}) if there is a function $\rho:Q\to \N$ such that $\rho(x)\geq 0$ for all $x\in Q$ and if $x\lessdot_Q y$, then $\rho(y)=\rho(x)+1$.
While there are infinitely many rank functions for a graded $Q$, there is a unique minimal rank function $\rho$ such that $\rho(x)-1$ is not a valid rank function.

Given a poset $Q$ on $[n]$, we can generalize the notion of the descent statistic for permutations.
The \Def{descent set} of $Q$ is 
	\[
	\Des(Q)\coloneqq\left\{ (i,j) \, : \, i\lessdot_{Q} j \mbox{ and } i>_{\N} j \right\}
	\]
and thus the \Def{descent number} of $Q$ is $\des(Q)\coloneqq|\Des(Q)|$.
If $Q$ is a graded poset on $[n]$ with minimal rank function $\rho$, we further have a notion of \Def{major index} of $Q$
	\[
	\maj(Q)\coloneqq \sum_{(i,j)\in\Des(Q)}\rho(j).
	\]
We note that if $Q$ is a totally ordered set with labels $\pi_1<_Q\pi_2<_Q\cdots<_Q\pi_n$, these quantities are precisely $\des(\pi)$ and $\maj(\pi)$.	

\subsection{Polytopes, fans, and $h$-vectors}
	A \Def{(convex) polytope} $P$ is the convex hull of finitely many point $\x_1,\ldots,\x_k\in \R^d$.
	The \Def{dimension} of $P$, denoted $\dim(P)$, is the dimension of the smallest affine subspace containing $P$.
	A \Def{face} $F$ of $P$ is the collection of points where a linear functional $\ell\in(\R^n)^\ast$ is maximized on $P$. 
	Faces of dimension $0$ are called \Def{vertices} and faces with $\dim(F)=\dim(P)-1$ are called \Def{facets}.
	A polytope $P$ is called \Def{simple} if every vertex is contained in exactly $\dim(P)$ many facets.
	The set of all faces of $P$ forms a poset $L(P)$ under inclusion of faces, which we will the \Def{face lattice} of $P$.
	We say that two polytopes $P_1$ and $P_2$ are \Def{combinatorially equivalent} if $L(P_1)=L(P_2)$.
	
	A \Def{polyhedral cone} $\sigma\subset \R^n$ is solution set to the weak inequality $A\x\geq 0$ for some real matrix $A$.
	A cone $\sigma$ is called \Def{pointed} if $\sigma$ contains no linear subspaces.
	The \Def{dimension} of $\sigma$, denoted $\dim(\sigma)$, is the dimension of the smallest affine subspace containing $\sigma$.
	A cone is called \Def{simplical} if it is defined by exactly $\dim(\sigma)$ many independent inequalities.
	A \Def{face} of $\sigma$ is the subset obtained by replacing some of the defining equalities with equality. 
	Two cones $\sigma_1$ and $\sigma_2$ \Def{intersect properly} if $\sigma_1\cap\sigma_2$ is a face of both $\sigma_1$ and $\sigma_2$.
	A collection of cones $\Fc$ is called a \Def{fan} if it closed under taking faces, and any two cones $\sigma_1,\sigma_2\in \Fc$ intersect properly.
	We say $\Fc$ is a \Def{complete fan} if $\Fc$ covers $\R^n$.
	
	Let $P$ be a polytope with face $F$.
	The \Def{normal cone of $F$ in $P$}, denoted $\Nc_P(F)$  is the subset of linear functions $\ell\in(\R^n)^\ast$ whose maximum on $P$ occurs for all points on $F$. That is,
	\[
	\Nc_P(F)\coloneqq \left\{ \ell\in (\R^n)^\ast \ : \ \ell(x)=\max\{\ell(y) \ : \ y\in P\} \mbox{ for all } x\in F \right\}
	\]
The \Def{normal fan} of $P$, denote $\Nc(P)$ is the complete fan formed by the normal cones of all faces.
Note that $\Nc(P)$ is pointed if and only if $\dim(P)=n$.
However, one can always reduce $\Nc(P)$ to a pointed fan in the space $(\R^n)^\ast/P^{\bot}$, where $P^{\bot}\subset(\R^n)^\ast$ is the subset of linear functionals constant on $P$.

Given a polytope $P$, the \Def{$f$-vector} of $P$ is the vector $(f_0(P),f_1(P),\ldots, f_{\dim(P)}(P))$ where $f_i(P)$ is the number of $i$-dimensional faces of $P$.
The \Def{$f$-polynomial} of $P$ is the generating function $f_P(t)=\sum_{i=0}^{\dim(P)}f_i(P)t^i$. 
Moreover, one can define $f$-vector and $f$-polynomial of fan $\Fc$ in the obvious way. 
The $f$-vectors of a polytope $P$ and its normal fan $\Nc(P)$ are related by $f_{i}(P)=f_{\dim(P)-i}(\Nc(P))$.

Given $P$ a simple polytope, or equivalently if $\Fc$ is a simplical fan, one can instead consider a different vector.
The  \Def{$h$-vector} of $P$ is the vector $(h_0(P),\ldots,h_{\dim(P)}(P))\in \Z_{\geq 0}^{\dim(P)+1}$ 
 and the \Def{$h$-polynomial} is $h_P(t)=\sum_{i=0}^{\dim(P)}h_i(P)t^i$ defined uniquely by the relation $f_P(t)=h_P(t+1)$.
 Likewise, the $h$-polynomial of $\Fc$, $h_\Fc(t)=\sum_{i=0}^{\dim(\Fc)}h_i(\Fc)t^i$ is given by the relation $t^d f_\Fc(t^{-1})=h_\Fc(t+1)$.
Hence, the $h$-polynomial of a polytope $P$ and the $h$-polynomial of its normal fan $\Nc(P)$ coincide.
In this case, it happens that the $h$-polynomial satisfies the \Def{Dehn-Sommerville relations} $h_i(P)=h_{\dim(P)-i}(P)$ for $i=0,1,\ldots, \dim(P)$ (see, e.g., {\cite[Sec. 8.3]{Ziegler-Book}}). 
		
\subsection{Generalized permutohedra}
\label{sec:genperm}
Given $\alpha=(\alpha_1,\alpha_2,\ldots,\alpha_n)\in\R^n$ such that $0\leq \alpha_1<\alpha_2<\cdots<\alpha_n$, the \Def{$\alpha$-permutohedron} or \Def{usual permutohedron} $\Pi^\alpha_n\subset\R^n$ is the convex hull of the $S_n$-orbit of $\alpha$. 
Note that this is an $(n-1)$-dimensional polytope, as it lies in the hyperplane $\sum_{i=1}^n\x_i = \sum_{j=0}^n\alpha_j$.
Regardless of the choice of $\alpha$, the normal fan of $\Pi_n^\alpha$ is the \Def{braid fan} is
\newcommand{\Brn}{\operatorname{Br}_n}	
	\[
	\Brn\coloneqq \{ \sigma(\pi) \ : \ \pi\in S_n\}\subseteq \R^n/(1,1,\ldots,1)
	\]
where the full dimensional cones $\sigma(\pi)$ are
	\[
	\sigma(\pi)=\{x\in \R^n/(1,1,\ldots,1) \ : \ x_{\pi_1}\leq x_{\pi_2}\leq \cdots \leq x_{\pi_n}\}.
	\]
See Figure~\ref{fig:braid3} for the example of $\operatorname{Br}_3$.	
Given that any choice of $\alpha$ produces the normal fan of $\Brn$, we will usually consider usual permutohedron for $\alpha=(0,1,2,\ldots,n-1)$, which we will simply denote $\Pi_n$. As a consequence of its description in terms of restriction sets, the $h$-polynomial for $\Pi_n$ is given by the Eulerian polynomial 
	\[
	h_{\Pi_n}(t)=\sum_{\pi\in S_n}t^\des(\pi).
	\]  
	
\begin{figure}
	\centering 
	\begin{tikzpicture}
	\draw[black,thick, <->] (0,2.2)--(0,-2.2);
	\draw[black,thick, <->] (-1.9,1.097)--(1.9,-1.097);
	\draw[black,thick, <->] (1.9,1.097)--(-1.9,-1.097);
	
	\node at (.75,1.2) {$123$};
	\node at (-.75,1.2) {$213$};
	\node at (.75,-1.2) {$312$};
	\node at (-.75,-1.2) {$321$};
	\node at (1.25,0) {$132$};
	\node at (-1.25,0) {$231$};
	
	\node at (0,2.3) {\tiny $x_1=x_2$};
	\node at (2.2,1.29) {\tiny $x_2=x_3$};
	\node at (-2.2, 1.29) {\tiny $x_1=x_3$};
	
	\draw[fill=red,opacity=0.3] (0,0)--(0,2)--(1.732,1)--cycle;
	\draw[fill=red,opacity=0.3] (0,0)--(0,-2)--(1.732,-1)--cycle;
	\draw[fill=red,opacity=0.3] (0,0)--(-1.732,1)--(-1.732,-1)--cycle;
	
	\draw[fill=blue,opacity=0.3] (0,0)--(0,2)--(-1.732,1)--cycle;
	\draw[fill=blue,opacity=0.3] (0,0)--(0,-2)--(-1.732,-1)--cycle;
	\draw[fill=blue,opacity=0.3] (0,0)--(1.732,1)--(1.732,-1)--cycle;
	\end{tikzpicture}
\caption{$\operatorname{Br}_3$ in $\R^3/(1,1,1)$}
\label{fig:braid3}	
\end{figure}		

Introduced by Postnikov in \cite{Postnikov-Beyond}, a \Def{generalized permutohedron} $P\subset \R^n$ is a convex polytope whose normal cone $\Nc(P)\subset \R^n/(1,1,\ldots,1)$ can be refined to $\Brn$. 
We say that $\Nc(P)$ is a \Def{coarsening} of $\Brn$ if there is a polytopal realization for $\Nc(P)$ which can be refined by $\Brn$

Suppose that $\Fc$ is a coarsening of $\Brn$. 
Given a full-dimensional cone $\sigma\in \Fc$, one can associate a poset on $[n]$ to the cone $\sigma$,  denoted $Q_{\sigma}$, by the relations $i<_{Q_\sigma}j$ if $x_{i}\leq x_j$ for all $\x\in \sigma$.
The poset $Q_\sigma$ is connected and acyclic if and only if $\sigma$ is a full-dimensional simplical cone. 
The reader should consult {\cite[Sec. 3]{PostnikovReinerWilliams}} for additional exposition and details on this correspondence. 

In the case of a simple generalized permutohedron, or rather a simplicial coarsening of $\Brn$, one can give a combinatorial formula for the $h$-polynomial in terms of descents on acyclic posets, which is a natural generalization of the result for the usual permutohedron.	
\begin{theorem}[{\cite[Thm. 4.2]{PostnikovReinerWilliams}}]
\label{thm:hpolyGeneral}
Let $P$ be a simple generalized permutahedron and let $\{Q_\sigma\}_{\sigma\in \Nc(P)}$ be the posets for full dimensional cones in the normal fan $\Nc(P)$. Then
	\[
	h_P(t)=\sum_{\sigma\in\Nc(P)}t^{\des(Q_\sigma)}.
	\]
\end{theorem}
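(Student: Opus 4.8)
The plan is to deduce the formula from the standard Morse/shelling description of the $h$-polynomial of a simple polytope, identifying for a suitable sweep the ``descent'' count of each vertex with the descent set of its poset. Since $h_P(t)=h_{\Nc(P)}(t)$ and $\Nc(P)$ is a complete simplicial fan (because $P$ is simple), it suffices to read off $h_P(t)$ from $P$ directly. The tool is the classical fact, dual to the restriction-set description recalled in the introduction, that for a simple polytope $P$ and a \emph{generic} linear functional $\xi$ one has $h_k(P)=\#\{\text{vertices }v:\ v\text{ has exactly }k\text{ down-edges}\}$, where an edge at $v$ is a \emph{down-edge} if the neighboring vertex has strictly smaller $\xi$-value. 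Under the correspondence between maximal cones $\sigma\in\Nc(P)$ and vertices $v_\sigma$ of $P$, the goal is therefore to choose $\xi$ so that the number of down-edges at $v_\sigma$ equals $\des(Q_\sigma)$ for every $\sigma$.

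First I would set up the edge/covering-relation dictionary. The simplicial cone $\sigma$ has exactly $n-1$ walls, which under the cone--poset correspondence \cite{PostnikovReinerWilliams} are precisely the covering relations of the connected acyclic poset $Q_\sigma$; dually, the $n-1$ edges at the vertex $v_\sigma$ are in bijection with these covering relations. Because $P$ is a generalized permutohedron, every edge of $P$ is parallel to some $\e_i-\e_j$, and the edge attached to the covering relation $i\lessdot_{Q_\sigma}j$ is exactly the one in direction $\e_i-\e_j$. I would then take $\xi$ to be a small generic perturbation of the functional $(n,n-1,\dots,1)$: genericity ensures no edge of $P$ is $\xi$-horizontal, so $\xi$ orients every edge and the down-edge count is well defined at each vertex.

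It remains to match $\xi$-decreasing edges with descents. Since $\langle (n,n-1,\dots,1),\,\e_i-\e_j\rangle=j-i$ and the perturbation is small, $\xi$ decreases in the direction $\e_i-\e_j$ precisely when $i>_{\N}j$. Hence the down-edges at $v_\sigma$ are exactly the covering relations $i\lessdot_{Q_\sigma}j$ with $i>_{\N}j$, which is the definition of $\Des(Q_\sigma)$. Therefore the number of down-edges at $v_\sigma$ equals $\des(Q_\sigma)$, and summing the Morse/shelling identity over all maximal cones yields $h_P(t)=\sum_{\sigma\in\Nc(P)}t^{\des(Q_\sigma)}$. As a sanity check, for the usual permutohedron $Q_{\sigma(\pi)}$ is the chain $\pi_1<_Q\cdots<_Q\pi_n$, whose descents are the ordinary descents of $\pi$, recovering $h_{\Pi_n}(t)=\sum_{\pi}t^{\des(\pi)}$.

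The main obstacle is the Morse/shelling step itself: justifying that a generic linear functional on a simple polytope is a perfect ``Morse'' function whose vertex down-edge counts reproduce the $h$-vector (equivalently, that the induced sweep is a shelling of the polar dual and the restriction set of $F_\sigma$ is the set of $\xi$-down edges at $v_\sigma$, not merely of the correct aggregate cardinality). This is the combinatorial heart of the argument. The delicate point is aligning orientations, so that the edge direction $\e_i-\e_j$ assigned to the covering relation $i\lessdot_{Q_\sigma}j$ makes $\xi$-descending coincide with numerically descending; the remaining ingredients—edges of generalized permutohedra are parallel to $\e_i-\e_j$, walls of $\sigma$ are the covering relations of $Q_\sigma$, and a small perturbation preserves all edge signs—are either supplied by the cone--poset dictionary or are routine sign computations.
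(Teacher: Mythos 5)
The paper does not prove this statement itself: it is quoted verbatim from Postnikov--Reiner--Williams \cite[Thm.~4.2]{PostnikovReinerWilliams}, so there is no internal proof to compare against. Your argument is correct and is essentially the proof given there, phrased on the polytope side: the standard fact that for a generic linear functional $\xi$ on a simple polytope one has $h_k(P)=\#\{v:\ v\text{ has exactly }k\ \xi\text{-down-edges}\}$ (equivalently, on the fan side, counting walls of each maximal cone separating it from a generic point), combined with the cone--poset dictionary identifying the edges at $v_\sigma$ with the covering relations of $Q_\sigma$. The only delicate point is the orientation convention, and you resolve it correctly: the edge leaving $v_\sigma$ attached to the covering relation $i\lessdot_{Q_\sigma}j$ points in direction $\e_i-\e_j$ (since functionals in the interior of $\sigma$ satisfy $x_i<x_j$ and must decrease along edges leaving $v_\sigma$), so with $\xi$ a perturbation of $(n,n-1,\dots,1)$ the down-edges are exactly the descents of $Q_\sigma$.
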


\section{Simple generalized permutohedra}
\label{sec:general}

In this section, we will introduce a bivariate generalization for the $h$-polynomial of any simple generalized permutohedron.
Particularly, we will give a formula for a $q$-analogue of Theorem \ref{thm:hpolyGeneral}, which gives us the expected bivariate polynomial in the case of $\Pi_n$. 
Unfortunately, our generalization does not produce a polynomial invariant for the combinatorial type of $\Nc(P)$.
Rather, the polynomials will vary based upon the particular choice of coarsening of $\Brn$, and thus one may have combinatorially equivalent generalized permutohedra with different polynomials. 

Let $P\subset \R^n$ be a simple generalized permutohedron with normal fan $\Nc(P)\in \R^n/(1,1,\ldots, 1)$.
Given any full dimensional cones $\sigma\in \Nc(P)$, let $Q_{\sigma}$ denote the poset associated with $\sigma$.
First, observe the following.

\begin{proposition}
\label{prop:posetrank}
Given $\sigma\in\Nc(P)$ be a full dimensional cone for a simple generalized permutahedron $P$. Then the poset $Q_\sigma$ is an acyclic, graded poset with a unique minimal rank function $\rho:Q_\sigma\to \N$.  
\end{proposition}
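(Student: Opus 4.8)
The plan is to derive everything from the fact recorded just before the proposition: a full-dimensional cone $\sigma$ in a coarsening of $\Brn$ is simplicial if and only if the associated poset $Q_\sigma$ is connected and acyclic. Since $P$ is simple, its normal fan is a simplicial fan, so every full-dimensional $\sigma \in \Nc(P)$ is a simplicial cone; applying the correspondence from {\cite[Sec. 3]{PostnikovReinerWilliams}} then gives immediately that $Q_\sigma$ is connected and acyclic. It remains to exhibit the grading and the unique minimal rank function, and my strategy is to first upgrade ``connected and acyclic'' to the statement that the Hasse diagram of $Q_\sigma$ is a \emph{tree}.

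The key step is the edge count. I would first observe that the covering relations of $Q_\sigma$ are exactly the irredundant (facet-defining) inequalities of $\sigma$: the inequality $x_i \leq x_j$ valid on $\sigma$ is implied by the others precisely when there is an intermediate $k$ with $i <_{Q_\sigma} k <_{Q_\sigma} j$, i.e. precisely when $(i,j)$ is not a covering relation; a cover admits no such intermediate and hence is irredundant. Because $\sigma$ is pointed in $\R^n/(1,\ldots,1)$ and full-dimensional simplicial, it has exactly $\dim(\sigma) = n-1$ facets, so $Q_\sigma$ has exactly $n-1$ covering relations. A connected graph on $n$ vertices with $n-1$ edges is a tree, so the Hasse diagram of $Q_\sigma$ is a tree. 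I would stress that connectedness and acyclicity by themselves do not force this conclusion: the ``crown'' poset is connected with a unique saturated chain between any two comparable elements, yet its Hasse diagram contains a cycle and its cone fails to be simplicial. It is the facet count, and hence the simplicity of $P$, that rules such examples out.

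Granting the tree structure, gradedness follows by an argument I would carry out explicitly. Root the tree at any vertex $r$, and for each $x$ let $r = v_0, v_1, \ldots, v_k = x$ be the unique tree path; set $\rho(x) = \sum_{\ell} \epsilon_\ell$, where $\epsilon_\ell = +1$ if $v_\ell \lessdot_{Q_\sigma} v_{\ell+1}$ and $\epsilon_\ell = -1$ if $v_{\ell+1} \lessdot_{Q_\sigma} v_\ell$. This is well defined because tree paths are unique, and for any covering relation $x \lessdot_{Q_\sigma} y$ a short parent/child analysis along the rooted tree shows $\rho(y) = \rho(x) + 1$. Adding the constant $-\min_x \rho(x)$ yields a function valued in $\N$ that increments by one across every cover, so $Q_\sigma$ is graded.

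For uniqueness I would note that if $\rho$ and $\rho'$ are two rank functions then $\rho - \rho'$ is constant across every covering relation, hence constant on the connected Hasse diagram; thus any two rank functions differ by a global additive constant. The unique one whose minimum value equals $0$ is the minimal rank function, since subtracting $1$ then violates nonnegativity. I expect the main obstacle to be the identification of covering relations with facets and the resulting edge count, as this is exactly the point where simplicity of $P$, rather than mere connectedness and acyclicity of $Q_\sigma$, is essential.
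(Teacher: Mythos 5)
Your proof is correct, and in its middle portion it is actually more careful than the paper's own argument. The paper's proof is essentially three lines: simplicity of $P$ makes $\sigma$ simplicial, the correspondence of {\cite[Prop.~3.5]{PostnikovReinerWilliams}} makes $Q_\sigma$ acyclic, and from the resulting uniqueness of saturated chains the existence of a rank function is simply asserted and then normalized by subtracting the minimum. You are right to flag that last inference as the delicate point: under the paper's stated definition of ``acyclic'' (unique saturated chains between comparable elements), gradedness does not follow --- one can build connected posets with unique saturated chains whose Hasse diagram contains a cycle around which the signed up/down count is nonzero, so no rank function exists. What the paper is implicitly leaning on is the stronger content of the cited proposition, namely that $Q_\sigma$ is a \emph{tree-poset} (its Hasse diagram is a spanning tree of $[n]$). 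Your facet-counting argument --- covers of $Q_\sigma$ are exactly the facet-defining inequalities of the order cone, a full-dimensional simplicial cone in $\R^n/(1,\dots,1)$ has exactly $n-1$ facets, and a connected graph on $n$ vertices with $n-1$ edges is a tree --- is a clean, self-contained re-derivation of exactly that tree statement, and your signed-path-length construction of $\rho$ together with the connectedness argument for uniqueness of the minimal rank function supplies details the paper leaves implicit. One tiny caveat: the crown poset does show that ``connected with unique saturated chains'' does not force a tree Hasse diagram, but the crown itself is still graded (all minimal elements at rank $0$, all maximal at rank $1$); exhibiting a failure of gradedness under the paper's acyclicity condition requires a slightly different example. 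This does not affect the validity of your argument, which only needs the tree structure and gets it from the edge count.
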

\begin{proof}
Since $P$ is simple, this implies that $\sigma$ is simplicial. 
Thus, by observations in Section \ref{sec:genperm}
(c.f. {\cite[Prop. 3.5]{PostnikovReinerWilliams}}),  the poset $Q_\sigma$ is acyclic.
This implies that if $x<_{Q_\sigma} y$, there is a unique saturated chain from $x$ to $y$.
Hence, we can define a rank function $\rho$ such that $\rho(x)\geq 0$ for all $x\in Q_{\sigma}$ and if $x\lessdot_{Q_\sigma} y$ then $\rho(y)=\rho(x)+1$.
To obtain the unique minimal rank function, let $\rho$ be any valid function above and define $\tilde{\rho}(x)=\rho(x)-\alpha$, where $\alpha=\min_{y\in Q_\sigma}\rho(y)$.
\end{proof}

Based on the obserations of Proposition~\ref{prop:posetrank}, we can now give a $q$-analogue of the $h$-polynomial for a simple generalized permutohedron.

\begin{definition} \label{d:bivariatehpolynomial}
Let $P$ be a simple generalized permutahedron and let $\{Q_\sigma\}_{\sigma\in \Nc(P)}$ be the posets for full dimensional cones in the normal fan $\Nc(P)$. Then, the \Def{$q$-$h$-polynomial} is given by 
	\[
	h_P(t,q)\coloneqq \sum_{\sigma\in\Nc(P)}t^{\des(Q_\sigma)}q^{\maj(Q_\sigma)}.
	\]
\end{definition}

In the case of the usual permutohedron $\Pi_n$, this $q$-analogue gives us the expected generalization. 
The full dimensional cones of the braid fan correspond to permutations $\pi\in S_n$ giving the total order $Q_\pi$ which is $\pi_1<\pi_2<\cdots<\pi_n$.
By definition, $\des(Q_\pi)=\des(\pi)$ and $\maj(Q_\pi)=\maj(\pi)$.
Thus we have 
	\[
	h_{\Pi_n}(t,q)=\sum_{\pi\in S_n}t^{\des(\pi)}q^{\maj(\pi)}
	\] 
which is the Euler--Mahonian polynomial, an expected $q$-analogue of the Eulerian polynomial.

Unfortunately, this construction is not invariant under reordering of the ground set. 
That is, the $q$-analogue depends on the choice of embedding or (equivalently) the choice of coarsening of the braid fan.
For example, consider the associahedron $\mathsf{A}(3)\subset \R^3$ which is the polytope whose normal fan is obtained by merging exactly $2$ full-dimensional cones that intersect in an edge in $\operatorname{Br}_3$ (see Section \ref{sec:associahedron} for an in depth discussion of $\mathsf{A}(n)$). 
Two different choices of coarsening will produce combinatorially equivalent fans (resp. polytopes), but different multivariate polynomials.
If one coarsens the braid fan by merging the cones corresponding to the permutations $132$ and $312$, the obtained $q$-analogue is $h_{\mathcal{F}_1}(t,q)=1+tq+2tq^2+t^2q^3$.
Alternatively, if one instead coarsens the braid fan by merging the cones corresponding to $231$ and $321$, the obtained $q$-analogue is $h_{\mathcal{F}_2}(t,q)=1+2tq+tq^2+t^2q^2$. 
Of course when $q=1$ in either case we have $h_{\mathcal{F}}(t)=1+3t+t^2$ as expected.
These two choices of coarsening are depicted in  Figure~\ref{fig:A3choices}.

\begin{figure}
	\centering 
	\begin{tikzpicture}
	\draw[black,thick, <->] (0,2.2)--(0,-2.2);
	\draw[black,thick, <-] (-1.9,1.097)--(0,0);
	\draw[black,thick, <->] (1.9,1.097)--(-1.9,-1.097);
	
	\node at (.75,1.2) {$123$};
	\node at (-.75,1.2) {$213$};
	\node at (-.75,-1.2) {$321$};
	\node at (-1.25,0) {$231$};
	
	\node at (0,2.3) {\tiny $x_1=x_2$};
	\node at (2.2,1.29) {\tiny $x_2=x_3$};
	\node at (-2.2, 1.29) {\tiny $x_1=x_3$};
	
	\draw[fill=red,opacity=0.3] (0,0)--(0,2)--(1.732,1)--cycle;
	
	\draw[fill=red,opacity=0.3] (0,0)--(-1.732,1)--(-1.732,-1)--cycle;
	
	\draw[fill=blue,opacity=0.3] (0,0)--(0,2)--(-1.732,1)--cycle;
	\draw[fill=blue,opacity=0.3] (0,0)--(0,-2)--(-1.732,-1)--cycle;
	
	\draw[fill=green, opacity=0.3] (0,0)--(0,-2)--(1.732,1)--cycle;
	
	\draw[fill=black] (.4,-.1) circle(1.5pt);
	\draw[fill=black] (.2, -.5) circle(1.5pt);
	\draw[fill=black] (.6,-.5) circle(1.5pt);
	\draw[black] (.2,-.5)--(.4,-0.1)--(.6,-.5);
	\node at (.2,-.7) {\tiny $1$};
	\node at (.6,-.7) {\tiny $3$};
	\node at (.4, .07) {\tiny $2$};

	\end{tikzpicture}
\hspace{1cm}	
	\begin{tikzpicture}
	\draw[black,thick, <->] (0,2.2)--(0,-2.2);
	\draw[black,thick, <->] (-1.9,1.097)--(1.9,-1.097);
	\draw[black,thick, <-] (1.9,1.097)--(0,0);
	
	\node at (.75,1.2) {$123$};
	\node at (-.75,1.2) {$213$};
	\node at (.75,-1.2) {$312$};
	\node at (1.25,0) {$132$};
	
	\node at (0,2.3) {\tiny $x_1=x_2$};
	\node at (2.2,1.29) {\tiny $x_2=x_3$};
	\node at (-2.2, 1.29) {\tiny $x_1=x_3$};
	
	\draw[fill=red,opacity=0.3] (0,0)--(0,2)--(1.732,1)--cycle;
	\draw[fill=red,opacity=0.3] (0,0)--(0,-2)--(1.732,-1)--cycle;
	
	\draw[fill=blue,opacity=0.3] (0,0)--(0,2)--(-1.732,1)--cycle;
	\draw[fill=blue,opacity=0.3] (0,0)--(1.732,1)--(1.732,-1)--cycle;
	
	\draw[fill=green, opacity=0.3] (0,0)--(0,-2)--(-1.732,1)--cycle;
	
	\draw[fill=black] (-.4,-.1) circle(1.5pt);
	\draw[fill=black] (-.2, -.5) circle(1.5pt);
	\draw[fill=black] (-.6,-.5) circle(1.5pt);
	\draw[black] (-.2,-.5)--(-.4,-0.1)--(-.6,-.5);
	\node at (-.2,-.7) {\tiny $2$};
	\node at (-.6,-.7) {\tiny $3$};
	\node at (-.4, .07) {\tiny $1$};
	\end{tikzpicture}
\caption{Two coarsenings of $\operatorname{Br}_3$ which are combinatorially equivalent by produce different $q$-$h$-polynomials.}
\label{fig:A3choices}	
\end{figure}
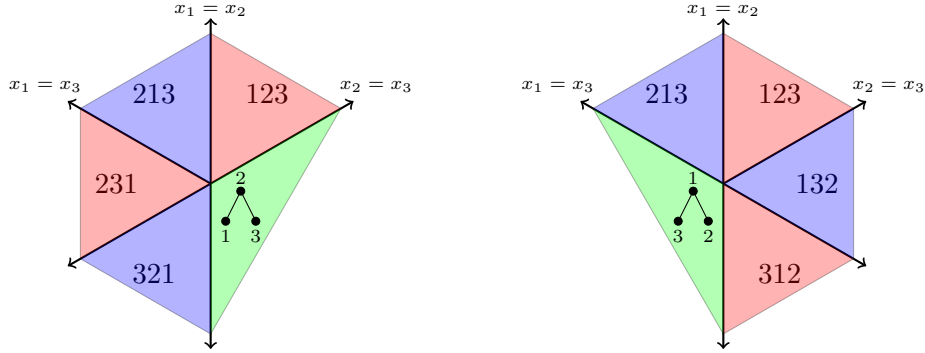

\section{Nestohedra}
\label{sec:Nestohedra}
In this section, we focus on a broad class of simple generalized permutohedra known as nestohedra, for which one can be more explicit.
The nestohedra were first introduced by Postnikov \cite{Postnikov-Beyond}. 
To construct a nestohedron, we need the notion of a building set.

\begin{definition}[{\cite[Def. 7.1]{Postnikov-Beyond}}]
A collection $\mathcal{B}$ of nonempty subsets of $[n]$ is called a \Def{building set} if it satisfies the following conditions:
	\begin{enumerate}
	\item If $I,J \in \mathcal{B}$ and $I\cap J\neq \emptyset$, then $I\cup J\in \mathcal{B}$.
	\item $\mathcal{B}$ contains all singletons $\{i\}$, such that $i\in[n]$.
	\end{enumerate}	
\end{definition}

A building set $\mathcal{B}$ is \Def{connected} if  $[n]\in \mathcal{B}$. 
For any building set, one can define a nestohedron.
For any subset $I\subseteq [n]$, let $\Delta_I\coloneqq \conv\{ e_i \ : \ i\in I\}$.

\begin{definition}[(see {\cite[Def. 6.3]{PostnikovReinerWilliams}}]
Given a building set $\mathcal{B}$ on $[n]$.
The \Def{nestohedron} $P_{\mathcal{B}}$ on the building set is the polytope obtained from the Minkowski sum
	\[
	P_{\mathcal{B}}\coloneqq \sum_{I\in \mathcal{B}}y_I\Delta_I
	\]
for some strictly positive parameters $y_I$. 	
\end{definition}

One can see that $P_{\mathcal{B}}$ is a generalized permutohedron because $\Nc(\Delta_I)$ is  refined by $\Brn$ and thus $\Nc(\sum_{I\in \mathcal{B}}y_I\Delta_I)$ must also be refined by $\Brn$  {\cite[Prop. 7.12]{Ziegler-Book}}.
For our purposes, we are primarily interested in the explicit cones and associated posets in $\Nc(P_{\mathcal{B}})$.
These can be described through combinatorial means.
Given a rooted tree $T$ on $[n]$ which is directed such that all edges are oriented away from the root and a vertex $i$ in $T$,  let $T_{\leq i}$ be the tree of descendants of $i$. That is, $j\in T_{\leq i}$ if there is a directed path from $i$ to $j$ in $T$.
We define $\mathcal{B}$-trees for a connected building set $\mathcal{B}$. 

\begin{definition}[{\cite[Def. 7.7]{Postnikov-Beyond}}]
For a connected building set $\mathcal{B}$ on $[n]$, a \Def{$\mathcal{B}$-tree} is a rooted tree $T$ on the set $[n]$ such that 
	\begin{enumerate}
	\item For any $i\in[n]$, one has $T_{\leq i}\in \mathcal{B}$
	\item For any $k\geq 2$ incomparable nodes $i_1,\ldots, i_k\in [n]$, one has $\bigcup_{j=1}^k T_{\leq i_j}\not\in \mathcal{B}$.
	\end{enumerate}
\end{definition}

One can algorithmically construct all of these $\mathcal{B}$-trees using the following proposition.

\begin{proposition}[{\cite[Prop. 8.5]{PostnikovReinerWilliams}}]
\label{prop:BTreeConstruction}
Let $\Bc$ be a connected building set on $[n]$ and let $i\in[n]$.
Let $\Bc_1,\ldots,\Bc_r$ be the connected components of of the restriction $\Bc|_{[n]\setminus \{i\}}$.
Then all $\Bc$-trees with root at $i$ are obtained by picking a $\Bc_j$-tree $T_j$, for each component $\Bc_j$, $j=1,\ldots,r$, and connecting the roots of $T_1,\ldots,T_r$ to the node $i$ by edges.
\end{proposition}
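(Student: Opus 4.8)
The plan is to prove the statement as a bijection between $\Bc$-trees rooted at $i$ and tuples $(T_1,\dots,T_r)$ in which each $T_j$ is a $\Bc_j$-tree. Before either direction I would isolate the single structural fact about building sets that the whole argument rests on: in any building set $\Cc$ on a ground set $S$, the maximal elements are pairwise disjoint and cover $S$, so they partition $S$, and moreover \emph{every} $B\in\Cc$ is contained in a unique maximal element. Disjointness and covering follow at once from the two defining conditions, and for the last clause, if $x\in B\cap C$ with $C$ maximal, then $B\cap C\neq\emptyset$ forces $B\cup C\in\Cc$, whence $B\cup C=C$ by maximality, i.e.\ $B\subseteq C$. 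Applying this to $\Cc=\Bc|_{[n]\setminus\{i\}}=\{B\in\Bc:B\subseteq[n]\setminus\{i\}\}$ produces the components $S_1,\dots,S_r$ (the ground sets of $\Bc_1,\dots,\Bc_r$) partitioning $[n]\setminus\{i\}$, with $\Bc_j=\{B\in\Bc:B\subseteq S_j\}$; note each $\Bc_j$ is connected since $S_j$ is its unique maximal element, so $\Bc_j$-trees are well defined. I will also use the membership transfer that for $U\subseteq S_j$ one has $U\in\Bc\iff U\in\Bc_j$.

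For the direction that every $\Bc$-tree $T$ rooted at $i$ arises from the construction, I would examine the children $c_1,\dots,c_m$ of $i$. Their subtrees $T_{\leq c_1},\dots,T_{\leq c_m}$ partition $[n]\setminus\{i\}$, and each $T_{\leq c_\ell}\in\Bc$ by condition (1), so by the structural fact each lies inside a single component $S_j$; thus $\{T_{\leq c_\ell}\}$ refines $\{S_j\}$. To see the refinement is trivial, suppose some $S_j$ were covered by two or more of these subtrees: since $c_1,\dots,c_m$ are pairwise incomparable, condition (2) would force that union to lie outside $\Bc$, contradicting $S_j\in\Bc$. Hence $m=r$ and, after relabeling, $T_{\leq c_\ell}=S_j$. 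Finally I would verify the subtree on $S_j$ rooted at $c_\ell$ is a $\Bc_j$-tree: condition (1) holds because descendant sets inside the subtree coincide with those in $T$ (each in $\Bc$ and inside $S_j$, hence in $\Bc_j$), and condition (2) transfers because every relevant union is $\subseteq S_j$, so lying outside $\Bc$ is equivalent to lying outside $\Bc_j$.

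For the reverse direction, given $\Bc_j$-trees $T_j$ on $S_j$ I would form $T$ by attaching each root of $T_j$ to $i$ and check the two conditions. Condition (1) is immediate: $T_{\leq i}=[n]\in\Bc$ since $\Bc$ is connected, and for $v\in S_j$ one has $T_{\leq v}=(T_j)_{\leq v}\in\Bc_j\subseteq\Bc$. For condition (2), take incomparable nodes $v_1,\dots,v_k$ with $k\geq2$; as $i$ is the root it is comparable to everything, so all $v_\ell\in[n]\setminus\{i\}$ and $U\coloneqq\bigcup_\ell T_{\leq v_\ell}\subseteq[n]\setminus\{i\}$. If all $v_\ell$ lie in one component $S_j$, the claim reduces, via $U\in\Bc\iff U\in\Bc_j$, to condition (2) for $T_j$.

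The step I expect to be the crux is condition (2) when the $v_\ell$ span several components. Here I would argue by contradiction: if $U\in\Bc$, then $U\in\Bc|_{[n]\setminus\{i\}}$, and the structural fact forces $U$ into a single component, contradicting that $U$ meets at least two of the $S_j$; hence $U\notin\Bc$. This cross-component case, together with its dual in the forward direction (ruling out a component being split among several children of $i$), is exactly where the uniqueness of the containing maximal element does all the work; everything else is bookkeeping about descendant sets. Collecting the two directions yields the asserted bijection, and hence the proposition.
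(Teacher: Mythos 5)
Your argument is correct: the key observation that the maximal elements of $\Bc|_{[n]\setminus\{i\}}$ partition $[n]\setminus\{i\}$ and absorb every member of the restricted building set that they meet is exactly what makes both directions (and in particular the cross-component case of condition (2)) go through. The paper itself gives no proof of this statement --- it is quoted directly from Postnikov--Reiner--Williams --- and your proof is the standard decomposition argument found in that reference, so there is nothing to compare beyond noting that your write-up fills in a proof the paper omits.
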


For a building set $\mathcal{B}$, a $\mathcal{B}$-tree $T$ has the structure of a poset by $x\lessdot y$ provided that the there is an edge $(x,y)$ and $y$ is closer to the root.
For ease of notation, we will write $x\lessdot_T y$ to denote an edge $(x,y)$ in $T$ and to indicate which element is closer to the root.
So, 
	\[
	\Des(T)=\{(i,j) \, : \, i\lessdot_T j \mbox{ and } i>_\N j\}
	\]
and $\des(T)=|\Des(T)|$.
Given $x\in T$, we say that the \emph{depth} of $x$, denoted $\dpt(x)$, is the length of the unique path from $x$ to the root.
The \emph{depth} of $T$ is $\depth(T)\coloneqq \max_{x\in T} \dpt(x)$.
The \emph{major index} of $T$ is
	\[
	\maj(T)\coloneqq \sum_{(i,j)\in \Des(T)} \left(\depth(T)-\dpt(j)\right)
	\] 
\begin{remark}
Note that for any $x\in T$, the quantity $\depth(T)-\dpt(x)$ is precisely $\rho(x)$ where $\rho$ is the minimal rank function on the poset representation of $T$.
\end{remark}

\begin{proposition}[{\cite[Cor. 8.4]{PostnikovReinerWilliams}}]
For any connected building set $\mathcal{B}$ on $[n]$, the $h$-polynomial of the generalized permutohedron $P_{\mathcal{B}}$ is 
	\[
	h_{{\mathcal{B}}}(t)=\sum_{T}t^{\des(T)}
	\]
where the sum is over $\mathcal{B}$-trees $T$.	
\end{proposition}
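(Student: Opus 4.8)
The plan is to obtain the statement as a reindexing of Theorem~\ref{thm:hpolyGeneral}. Since $P_{\mathcal{B}}$ is a simple generalized permutohedron, that theorem gives
\[
h_{\mathcal{B}}(t) = \sum_{\sigma} t^{\des(Q_\sigma)},
\]
the sum ranging over the full-dimensional cones $\sigma \in \Nc(P_{\mathcal{B}})$. It therefore suffices to exhibit a bijection $\sigma \mapsto T_\sigma$ between these cones and the $\mathcal{B}$-trees under which $Q_\sigma$ and $T_\sigma$ coincide as labeled posets on $[n]$. Since $\Des(Q_\sigma)$ and $\Des(T_\sigma)$ are each defined purely through the covering relations $i \lessdot j$ with $i >_{\N} j$, and covering relations are determined by the order relation, agreement of the two posets forces $\des(Q_\sigma) = \des(T_\sigma)$; summing over $\sigma$ then converts the right-hand side above into $\sum_T t^{\des(T)}$.

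First I would recall that the full-dimensional cones of $\Nc(P_{\mathcal{B}})$ are precisely the normal cones at the vertices of $P_{\mathcal{B}}$, and that Postnikov's theory identifies the vertices of a nestohedron with its $\mathcal{B}$-trees. Under this identification the cone $\sigma$ at the vertex indexed by $T$ is the union of the braid cones $\sigma(\pi)$ over the linear extensions $\pi$ of the tree poset $T$, so that $Q_\sigma$—the collection of relations $x_i \le x_j$ forced throughout $\sigma$—is exactly the intersection of those total orders, namely the partial order of $T$. To make this self-contained I would induct using Proposition~\ref{prop:BTreeConstruction}: the root $i$ of $T$ is the unique maximal element of $Q_\sigma$, the connected components $\mathcal{B}_1,\dots,\mathcal{B}_r$ of $\mathcal{B}|_{[n]\setminus\{i\}}$ index the subtrees $T_1,\dots,T_r$ whose roots attach to $i$, and deleting this top element decomposes $Q_\sigma$ into the disjoint union of the component posets, to which the inductive hypothesis applies. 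The only new covering relations are the edges $i \lessdot \operatorname{root}(T_j)$, and these match exactly the new comparabilities forced at the top of $\sigma$.

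The crux, and the step I expect to be the main obstacle, is verifying that the coordinate inequalities forced on a maximal cone $\sigma$ coincide exactly with the ancestor order of the corresponding $\mathcal{B}$-tree: that passing from the geometry (the inequalities forced on $\sigma$) to the combinatorics (the tree edges) introduces no spurious relations and omits none, and that the resulting assignment is genuinely a bijection rather than merely a well-defined map. Once the labeled posets $Q_\sigma$ and $T$ are matched, nothing further is needed: the proposition is a formal consequence of Theorem~\ref{thm:hpolyGeneral} with no residual computation involving the $h$-vector.
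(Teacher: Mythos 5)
The paper does not actually prove this proposition: it is imported verbatim as a citation of \cite[Cor.~8.4]{PostnikovReinerWilliams}, so there is no internal argument to compare against. That said, your proposed route --- specialize Theorem~\ref{thm:hpolyGeneral} to $P_{\mathcal{B}}$ and match the maximal cones of $\Nc(P_{\mathcal{B}})$ with $\mathcal{B}$-trees so that $Q_\sigma$ equals the tree poset --- is exactly how the cited source derives its Corollary~8.4, and the formal part of your argument (agreement of labeled posets forces agreement of descent sets, hence of the generating functions) is airtight.

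Two caveats keep this from being a complete proof rather than a correct outline. First, you invoke Theorem~\ref{thm:hpolyGeneral} by asserting that $P_{\mathcal{B}}$ is \emph{simple}; that is a genuine theorem about nestohedra (equivalently, that the maximal cones of $\Nc(P_{\mathcal{B}})$ are simplicial), not something that follows from the definition of a building set, and it needs to be either proved or cited. Second, the step you yourself flag as the crux --- that the vertex normal cones are indexed by $\mathcal{B}$-trees and that the relations $x_i \le x_j$ forced on such a cone are precisely the ancestor relations of the corresponding tree, with no spurious comparabilities --- is the substantive content of the cited results in \cite{Postnikov-Beyond} and \cite{PostnikovReinerWilliams}. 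Your inductive strategy via Proposition~\ref{prop:BTreeConstruction} is the right one, but carrying it out requires knowing that deleting the ray dual to the root decomposes the relevant star of the fan as a product over the components of $\mathcal{B}|_{[n]\setminus\{i\}}$; that compatibility of the geometric recursion with the combinatorial one is what actually has to be checked, and your sketch asserts it rather than verifies it. Since the paper itself treats the whole statement as a black box, this is a reasonable level of detail, but be aware that the weight of the proof lives entirely in the step you deferred.
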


Given  connected building sets $\Bc_1, \ldots, \Bc_r$ on pairwise disjoint sets $S_1,\ldots, S_r$, we can form the \Def{combined connected building set} $\Bc$ on $S=\bigcup_{i=1}^r S_i$ by $\Bc=\left( \bigsqcup_{i=1}^r\Bc_i\right)\sqcup \{S\}$.
We will now give a formula for the $h$-polynomial of such a building set.

\begin{proposition}
\label{prop:hpolyproduct}
Let $\Bc_1, \ldots, \Bc_r$ be connected building sets on the pairwise disjoint sets $S_1,\ldots, S_r$, and let $\Bc$ be the combined connected building set on $S=\bigcup_{i=1}^r S_i$.
Then 
	\[
	h_\Bc(t)=(1+t+\cdots+t^{r-1}) \prod_{i=1}^r h_{\Bc_i}(t).
	\]
\end{proposition}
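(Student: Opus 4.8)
The plan is to compute $h_\Bc(t)=\sum_T t^{\des(T)}$ directly by sorting each $\Bc$-tree according to which block contains its root and then invoking the recursive construction of Proposition~\ref{prop:BTreeConstruction}. The one genuinely delicate point is that $\des$ depends on the total order on the ground set $S$, while the target formula is symmetric in the blocks. So the first step I would take is to note that $\sum_T t^{\des(T)}$ is the $h$-polynomial of the polytope $P_\Bc$, hence a combinatorial invariant unchanged under relabeling the ground set (relabeling $[n]$ permutes coordinates, producing an affinely equivalent nestohedron with the same face lattice). This frees me to choose a convenient total order in which the blocks occur as consecutive intervals with $S_1<S_2<\cdots<S_r$, and, by the same invariance applied to each $\Bc_i$, to compute each $h_{\Bc_i}(t)=\sum_{T_i}t^{\des(T_i)}$ in its own labeling.

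Next I would pin down the shape of a $\Bc$-tree. The only member of $\Bc$ meeting more than one block is $S$ itself, and $T_{\leq x}=S$ forces $x$ to be the root $v$; hence every proper descendant subtree lies inside a single block. Applying Proposition~\ref{prop:BTreeConstruction} at the root $v\in S_{i_0}$, the connected components of $\Bc|_{S\setminus\{v\}}$ are exactly the $\Bc_j$ for $j\neq i_0$ together with the components of $\Bc_{i_0}|_{S_{i_0}\setminus\{v\}}$. Re-applying the same proposition to $\Bc_{i_0}$ shows that the latter components, reattached to $v$, assemble into an arbitrary $\Bc_{i_0}$-tree rooted at $v$, while each $\Bc_j$ with $j\neq i_0$ contributes a single child of $v$ carrying an arbitrary $\Bc_j$-tree $T_j$. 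This gives a bijection between $\Bc$-trees with root in $S_{i_0}$ and tuples consisting of a $\Bc_{i_0}$-tree rooted at $v$ (for $v$ ranging over $S_{i_0}$) and one $\Bc_j$-tree for each $j\neq i_0$.

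Then I would tally the descents across this decomposition. Edges internal to the $\Bc_{i_0}$-subtree and to each $\Bc_j$-subtree contribute precisely $\des(T_{i_0})$ and $\des(T_j)$ in the respective block labelings; no edges are lost or double-counted since below the root the tree splits cleanly into block-subtrees. The only new edges join $v$ to the root of each $T_j$ with $j\neq i_0$, and in the chosen order such an edge $\mathrm{root}(T_j)\lessdot_T v$ is a descent exactly when $\mathrm{root}(T_j)>v$, i.e.\ when $j>i_0$, contributing $r-i_0$ descents in all. Thus
\[
\des(T)=\des(T_{i_0})+\sum_{j\neq i_0}\des(T_j)+(r-i_0).
\]
Summing $t^{\des(T)}$, the sum over $v\in S_{i_0}$ and over $\Bc_{i_0}$-trees rooted at $v$ collapses to $h_{\Bc_{i_0}}(t)$ (each tree has a unique root), the independent choices of the $T_j$ give $\prod_{j\neq i_0}h_{\Bc_j}(t)$, and factoring out $t^{r-i_0}$ yields
\[
h_\Bc(t)=\sum_{i_0=1}^r t^{\,r-i_0}\prod_{j=1}^r h_{\Bc_j}(t)=(1+t+\cdots+t^{r-1})\prod_{j=1}^r h_{\Bc_j}(t).
\]

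The main obstacle is controlling the inter-block descent contribution: for an arbitrary labeling the number of descents among the connecting edges would depend on the individual trees $T_j$ and would not factor. It is precisely to force this term to depend only on $i_0$ (and so produce the clean prefactor $1+t+\cdots+t^{r-1}$) that the initial reduction to the block-ordered labeling, justified by the combinatorial invariance of $h_\Bc$, is essential; the remaining steps are then routine bookkeeping.
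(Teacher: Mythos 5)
Your proof is correct and follows essentially the same route as the paper's: order the blocks as consecutive intervals, decompose a $\Bc$-tree via Proposition~\ref{prop:BTreeConstruction} into block-subtrees according to which block contains the root, observe that the only new descents are the $r-i_0$ connecting edges to higher-indexed blocks, and sum over the root's block. Your explicit justification of the ``without loss of generality'' relabeling step (via invariance of the $h$-polynomial under relabeling of the ground set) is a point the paper leaves implicit, but the argument is otherwise the same.
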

\begin{proof}
Without loss of generality, let $S=[n]$ and let the sets $S_1,\ldots,S_r$ partition $[n]$ such that if $x\in S_i$ and $y\in S_j$, $x<y$ if and only if $i<j$ for every $1\leq i,j\leq r$.
Let $T$ be a $\mathcal{B}$-tree with vertex $i$ as the root.
Suppose that $i\in S_j$ for some $j$.
By Proposition~\ref{prop:BTreeConstruction}, $T$ is formed by connecting the root $i$ to the roots of trees on the connected components of $\mathcal{B}|_{[n]\setminus\{i\}}$.
Note that the connected components are precisesly $\mathcal{B}_k$ where $k\neq j$ and the connected components of $\mathcal{B}_j|_{S_j\setminus\{i\}}$. 
Therefore, $T$  is formed by $\mathcal{B}_k$-trees $T_1,T_2,\ldots,T_r$ such that for all $k\neq j$, the root of $T_k$ is connected to the root of $T_j$ for some $j=1,2,\ldots,r$.
Additionally, it is clear that for any collection of $\Bc_k$-trees, we can form a $\Bc$-tree by simply choosing one of the trees $T_j$ to be have the root.
Therefore, we will consider $T$ as being partitioned into $\Bc_k$-trees $T_1,T_2,\ldots,T_r$ with root in $T_j$ in this way.
Now, it is clear that $\des(T)=r-j+\sum_{k=1}^r \des(T_k)$ as the construction preserves all existing descents in each tree  $T_k$ and introduces exactly one new descent between $T_j$ and $T_k$ where $k>j$.
Since we the choices of trees for each $k$ are independent, the contribution of all trees where $T_j$ has the root to the $h$-polynomial is $t^{r-j}\prod_{k=1}^rh_{\Bc_k}(t)$.
Thus, summing over all choices of $j$ gives us the desired expression.
\end{proof}

Now we give a different characterization of the $q$-$h$-polynomial of the generalized permutohedron. This description comes from specializing Definition~\ref{d:bivariatehpolynomial} to the case of nestohedra, making use of alternative descriptions of the descent set and major index.

\begin{proposition}
\label{prop:nesto}
For any connected building set $\mathcal{B}$ on $[n]$, the $q$-$h$-polynomial of the generalized permutohedron $P_{\mathcal{B}}$ is 
	\[
	h_{{\mathcal{B}}}(t,q)=\sum_{T}t^{\des(T)}q^{\maj(T)}
	\]
where the sum is over $\mathcal{B}$-trees $T$.	
\end{proposition}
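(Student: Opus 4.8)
The plan is to reduce the statement to Definition~\ref{d:bivariatehpolynomial} by identifying, for each full-dimensional cone $\sigma \in \Nc(P_\Bc)$, the associated poset $Q_\sigma$ with a $\Bc$-tree $T$ regarded as a poset, and then checking that the two pairs of statistics agree under this identification. The first and main step is to invoke the correspondence discussed in Section~\ref{sec:genperm}, coming from {\cite[Prop. 3.5]{PostnikovReinerWilliams}} together with the nestohedron results of {\cite[Def. 7.7]{Postnikov-Beyond}}: since $P_\Bc$ is simple, every full-dimensional cone $\sigma$ is simplicial, and such cones are in bijection with the $\Bc$-trees $T$ in such a way that the partial order $Q_\sigma$ (defined by $i <_{Q_\sigma} j$ when $x_i \le x_j$ on all of $\sigma$) is exactly the tree order on $T$ in which the root is the maximal element and each node is covered by its parent. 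In particular, the covering relations $i \lessdot_{Q_\sigma} j$ are precisely the edges $i \lessdot_T j$ of the tree.

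Given this identification, the descent statistic matches on the nose. The poset descent set $\Des(Q_\sigma) = \{(i,j) : i \lessdot_{Q_\sigma} j,\ i >_\N j\}$ and the tree descent set $\Des(T) = \{(i,j) : i \lessdot_T j,\ i >_\N j\}$ are defined by the same condition on the same set of covering pairs, so $\Des(Q_\sigma) = \Des(T)$ and hence $\des(Q_\sigma) = \des(T)$.

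For the major index I would appeal to the Remark following the definition of $\maj(T)$, which identifies the minimal rank function $\rho$ on the poset $Q_\sigma = T$ as $\rho(x) = \depth(T) - \dpt(x)$. Proposition~\ref{prop:posetrank} guarantees that $Q_\sigma$ is graded with a well-defined minimal rank function, and the Remark computes its values. Therefore, for each descent $(i,j)$ the poset contribution $\rho(j)$ equals the tree contribution $\depth(T) - \dpt(j)$, and summing over $\Des(Q_\sigma) = \Des(T)$ yields $\maj(Q_\sigma) = \maj(T)$.

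Combining these, each cone $\sigma$ contributes $t^{\des(Q_\sigma)} q^{\maj(Q_\sigma)} = t^{\des(T)} q^{\maj(T)}$, and as $\sigma$ ranges over the full-dimensional cones of $\Nc(P_\Bc)$ the corresponding $T$ ranges over all $\Bc$-trees, so substituting into Definition~\ref{d:bivariatehpolynomial} gives the claimed formula. I expect the only real obstacle to be the first step: one must check that the correspondence matches the \emph{orientation} of the tree order with $Q_\sigma$ (so that the root is the maximal element and we obtain the tree order rather than its dual) and that no covering relations of $Q_\sigma$ arise beyond the tree edges---equivalently, that $T$ is literally the Hasse diagram of $Q_\sigma$. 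Both follow from acyclicity of $Q_\sigma$ (so that $Q_\sigma$ is the transitive closure of $T$, whose Hasse diagram recovers $T$), but they are the points that require care; once they are in place the comparison of statistics is immediate from the definitions and the Remark.
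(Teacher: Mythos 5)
Your proposal is correct and follows exactly the route the paper intends: the paper states this proposition without a formal proof, justifying it only by the sentence that it ``comes from specializing Definition~\ref{d:bivariatehpolynomial} to the case of nestohedra,'' which is precisely the cone--tree identification and statistic-matching you carry out. Your write-up simply supplies the details (the bijection between full-dimensional cones and $\Bc$-trees, the agreement of covering relations, and the identification $\rho(x)=\depth(T)-\dpt(x)$ from the Remark) that the paper leaves implicit, including the one genuinely delicate point about the orientation of the tree order.
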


Define the statistic $\mu(T)\coloneqq\sum_{(i,j)\in T}\left( \depth(T)-\dpt(j) \right)$. 
Note that this statistic depends only on the isomorphism type of the rooted tree $T$ not on the labeling.
With this, we introduce a trivariate analogue of the $h$-polynomial of a nestohedron on connected building set
	\[
	h_\Bc(t,q,u)\coloneqq\sum_{T}t^{\des(T)}q^{\maj(T)}u^{\mu(T)}
	\]

By the Dehn-Sommerville relations, we have that the $h$-polynomial is palindromic.
In certain cases, we can provide a multivariate analogue of palindromicity.

\begin{theorem}
\label{thm:InvolutionPalindromic}
Let $\Bc$ be a connected building set on $[n]$ which is invariant under the involution $\omega:[n]\to[n]$ such that $\omega(i)=n-i+1$.
Then the $h$-polynomial for the nestohedron $P_\Bc$ is
	\[
	h_{\Bc}(t,q,u)=t^{n-1}h_\Bc(t^{-1},q^{-1},qu)
	\]
\end{theorem}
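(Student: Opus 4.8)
The plan is to realize the label-reversing map $\omega(i) = n-i+1$ as an involution on the set of $\mathcal{B}$-trees and to track how the three statistics transform under it. First I would check that $\omega$ genuinely sends $\mathcal{B}$-trees to $\mathcal{B}$-trees: given a $\mathcal{B}$-tree $T$, let $\omega(T)$ be the rooted tree obtained by relabeling each vertex $i$ as $\omega(i)$ and keeping all edges. Relabeling does not change the underlying rooted tree, so $(\omega(T))_{\leq \omega(i)} = \omega(T_{\leq i})$, and the two defining conditions of a $\mathcal{B}$-tree for $\omega(T)$ are exactly the $\omega$-images of those for $T$. Since $\Bc$ is $\omega$-invariant, these images lie in (resp.\ outside of) $\Bc$ exactly when the originals do, so $\omega(T)$ is again a $\mathcal{B}$-tree. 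Thus $T \mapsto \omega(T)$ is an involution on the index set of the sum defining $h_{\Bc}(t,q,u)$.

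Next I would record how $\des$, $\maj$, and $\mu$ behave. Because $\omega$ preserves the rooted-tree isomorphism type, it preserves $\depth(T)$ and the depth of every vertex, hence the rank $\rho(x) = \depth(T) - \dpt(x)$; in particular $\mu(\omega(T)) = \mu(T)$, consistent with the observation that $\mu$ depends only on the unlabeled rooted tree. The edges of $T$ correspond to those of $\omega(T)$ via $(i,j) \mapsto (\omega(i),\omega(j))$ (with $j$ the parent), and since all labels are distinct this edge is a descent of $\omega(T)$ precisely when $\omega(i) > \omega(j)$, i.e.\ when $i < j$, i.e.\ when $(i,j)$ is an ascent of $T$. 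So $\omega$ interchanges descent and ascent edges, and as a tree on $[n]$ has $n-1$ edges this gives $\des(\omega(T)) = (n-1) - \des(T)$. For the major index, $\maj(\omega(T))$ sums $\rho(\text{parent})$ over the descent edges of $\omega(T)$; by rank-preservation this equals $\sum \rho_T(j)$ over the \emph{ascent} edges of $T$, which is $\mu(T) - \maj(T)$, since summing $\rho_T(\text{parent})$ over \emph{all} edges yields exactly $\mu(T)$.

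Finally I would substitute. The right-hand side $t^{n-1} h_{\Bc}(t^{-1}, q^{-1}, qu)$ expands as $\sum_T t^{(n-1)-\des(T)}\, q^{\mu(T)-\maj(T)}\, u^{\mu(T)}$, and by the three transformation rules the exponents of $t$, $q$, and $u$ are precisely $\des(\omega(T))$, $\maj(\omega(T))$, and $\mu(\omega(T))$. Reindexing the sum through the involution $T \mapsto \omega(T)$ then recovers $h_{\Bc}(t,q,u)$, giving the claim. I expect the main obstacle to be not any single calculation but ensuring the bookkeeping lines up exactly: the crux is the complementation $\maj(T) + (\text{ascent-rank sum}) = \mu(T)$ together with the descent/ascent swap, which is precisely what forces the substitution to be $q^{-1}$ paired with $qu$ (rather than any other coupling of $q$ and $u$), and the preliminary point that $\omega$-invariance of $\Bc$ transfers cleanly to both $\mathcal{B}$-tree conditions.
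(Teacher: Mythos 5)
Your proposal is correct and follows essentially the same route as the paper's proof: relabeling by $\omega$ is an involution on the set of $\Bc$-trees that preserves the unlabeled rooted tree (hence $\mu$), complements descent edges to ascent edges (giving $\des(\omega(T))=n-1-\des(T)$ and $\maj(\omega(T))=\mu(T)-\maj(T)$), and reindexing the sum yields the identity. The only cosmetic difference is that you verify directly from the definition that $\omega(T)$ is again a $\Bc$-tree, whereas the paper appeals to the recursive construction of Proposition~\ref{prop:BTreeConstruction}; your write-up is if anything slightly more explicit about the final substitution and about why the coupling $q^{-1}$ with $qu$ is forced.
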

\begin{proof}
Let $\Bc$ be a building set such that $\omega(\Bc)=\Bc$.
Suppose that $T$ is a $\Bc$-tree. 
By Proposition \ref{prop:BTreeConstruction}, there exists a $\Bc$-tree $\tilde{T}$ such that $T$ and $\tilde{T}$ such that $\tilde{T}=\omega(T)$. 
That is, the trees are isomorphic as unlabeled rooted trees, and one can obtain the appropriate labels of one tree by applying the involution. 
It is clear that $\Des(\tilde{T})=\{(i,j) \, : \, (i,j)\not\in \Des(T)\}$. 
Hence $\des(\tilde{T})=n-1-\des(T)$ and $\maj(\tilde{T})=\mu(T)-\maj(T)$.
This gives the equality above.
\end{proof}

\section{Examples}
\label{sec:Examples}
We conclude with a section computing explicit examples of $q$-$h$-polynomials for nestohedra of interest. 
Included in the list are $S_n$-invariant nestohedra, graph associahedra, the associahedron, the stellahedron, and the Stanley--Pitman polytope.

\subsection{$S_n$-invariant nestohedra}
We will now specialize to the case of building sets which are invariant under the action of $S_n$ on the ground set $[n]$. 
Note that a connected building set $\Bc$ on $[n]$ is $S_n$-invariant if and only if 
	\[
	\Bc=\left\{\{1\},\ldots,\{n\},  {{[n]}\choose j}, \ j=k,\ldots,n \right\}
	\]
for some $2\leq k\leq n$.
Therefore, for a fixed $n$ and fixed $2\leq k\leq n$, we will denote this building set $\Bc_n^k$.

\begin{proposition}
\label{prop:invartrees}
Let $\Bc_n^k$ be the $S_n$-invariant connected building set of $[n]$ with minimal nonsingleton set of cardinality $k$.
Suppose that $T_1$ and $T_2$ are any two $\Bc$-trees.
Then $T_1$ and $T_2$ are isomorphic as unlabeled rooted trees. 
Moreover, for any $\Bc$-tree $T$, $T\cong A_{k-1}\oplus C_{n-k+1}$ as a poset, where $A_i$ is an antichain on $i$ elements, $C_j$ is a totally ordered chain on $j$ elements, and $\oplus$ is ordinal sum. 
\end{proposition}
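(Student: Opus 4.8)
The plan is to pin down the shape of an arbitrary $\Bc_n^k$-tree $T$ directly from the two defining conditions, show that this shape is exactly $A_{k-1}\oplus C_{n-k+1}$, and then read off both assertions at once, since uniqueness of the shape immediately forces any two $\Bc$-trees $T_1,T_2$ to be isomorphic as unlabeled rooted trees.

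First I would translate conditions (1) and (2) into size conditions on the subtrees $T_{\leq i}$. Because $\Bc_n^k$ consists of the singletons together with all subsets of $[n]$ of cardinality at least $k$, condition (1) says that each node $i$ is either a leaf or satisfies $|T_{\leq i}|\geq k$. For condition (2), if $i_1,\dots,i_m$ are pairwise incomparable then the sets $T_{\leq i_j}$ are disjoint, so their union has size $\sum_j|T_{\leq i_j}|\geq 2$ and lies in $\Bc_n^k$ exactly when this size is at least $k$; thus condition (2) is equivalent to requiring that $\sum_{j=1}^m|T_{\leq i_j}|\leq k-1$ for every antichain $\{i_1,\dots,i_m\}$ with $m\geq 2$.

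Next I would extract the global structure from these inequalities. Two incomparable internal (non-leaf) nodes would each have subtree size at least $k$, giving a pairwise sum of at least $2k>k-1$ and contradicting the translated condition (2); hence the internal nodes are pairwise comparable and form a chain topped by the root. Let $b$ be the minimum internal node. Any internal descendant of $b$ would be a strictly smaller internal node, so every descendant of $b$ is a leaf, and as such leaves have no children they are precisely the children of $b$. Furthermore no node $w$ can be incomparable to $b$, since that would force $|T_{\leq w}|+|T_{\leq b}|\geq 1+k>k-1$; so every node is comparable to $b$, the ancestors of $b$ form the path from $b$ up to the root, and $T$ is exactly this path together with the leaf-children of $b$.

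Finally I would count. Letting $p$ denote the number of children of $b$, condition (1) gives $p+1=|T_{\leq b}|\geq k$; if $p\geq 2$ these children form an antichain, so condition (2) yields $p\leq k-1$, while the degenerate possibility $p=1$ forces $k=2$ and again $p=k-1$. Hence $b$ has exactly $k-1$ leaf children and the path from the root down to $b$ has $n-(k-1)=n-k+1$ nodes, so as a poset $T$ is the chain $C_{n-k+1}$ sitting above the antichain $A_{k-1}$ of leaves, i.e.\ $T\cong A_{k-1}\oplus C_{n-k+1}$. Since this description does not depend on $T$, any two $\Bc$-trees are isomorphic as unlabeled rooted trees. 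The one place that warrants care is the handling of the boundary cases $k=2$ (where the branching degenerates and $T$ is the chain $C_n=A_1\oplus C_{n-1}$) and $k=n$ (where the path degenerates to the single root carrying $n-1$ leaves); everything else is a direct, if slightly fiddly, deduction from the two size inequalities.
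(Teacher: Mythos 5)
Your argument is correct, but it takes a genuinely different route from the paper. The paper reads the shape off the recursive construction of $\Bc$-trees (Proposition~\ref{prop:BTreeConstruction}): each restriction of $\Bc_n^k$ obtained by deleting a chosen root is again an $S$-invariant building set on a smaller ground set, and it remains connected until only $k-1$ elements are left, at which point it degenerates to singletons; so one is forced to pick a single root $n-k+1$ times in a row (producing the chain $C_{n-k+1}$) and then to attach the remaining $k-1$ elements as leaves (the antichain $A_{k-1}$). You instead argue directly from the two defining conditions of a $\Bc$-tree, translating membership in $\Bc_n^k$ into the size constraints $|T_{\leq i}|\in\{1\}\cup\{k,\dots,n\}$ and, for any antichain of at least two nodes, $\sum_j |T_{\leq i_j}|\leq k-1$ (which is a correct reformulation, since descendant sets of incomparable nodes are disjoint and a union of size at least two lies in $\Bc_n^k$ exactly when it has at least $k$ elements). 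From these you force the internal nodes into a single chain and all leaves to hang from its bottom node, and the count of leaves comes out to exactly $k-1$. Both arguments are sound. The paper's approach is shorter, reuses machinery already in play, and matches how the trees are enumerated in the subsequent theorem (each successive root choice is a letter of the permutation labelling the chain); your approach is self-contained, avoids any appeal to the recursive construction, and handles the degenerate cases $k=2$ and $k=n$ explicitly, which the paper's one-line proof leaves implicit.
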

\begin{proof}
This follows from Proposition \ref{prop:BTreeConstruction} with the observation that $\Bc_n^k|_{[n]\setminus \{i\}}\cong \Bc_{n-1}^{k-1}$ which is a connected building set. 
Continuing in this fashion, repeated restictions will result in connected building sets until we arrive at $\Bc_{n}^k|_{[n]\setminus W}$ where $W\subset [n]$ with $|W|=n-k+1$, which consists only of singleton elements.
\end{proof}

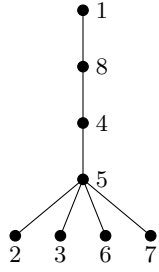
\begin{figure}
	\begin{tikzpicture}
		\draw[fill=black] (0,2) circle(2pt);
		\draw[fill=black] (0,1.25) circle(2pt);
		\draw[fill=black] (0,.5) circle(2pt);
		\draw[fill=black] (0,-.25) circle(2pt);
		\draw[fill=black] (-.9,-1) circle(2pt);
		\draw[fill=black] (-.3,-1) circle(2pt);
		\draw[fill=black] (.3,-1) circle(2pt);
		\draw[fill=black] (.9,-1) circle(2pt);
		
		\draw[black] (0,2)--(0,-.25);
		\draw[black] (0,-.25)--(-.9,-1);
		\draw[black] (0,-.25)--(.9,-1);
		\draw[black] (0,-.25)--(-.3,-1);
		\draw[black] (0,-.25)--(.3,-1);
		
		\node at (.25,2) {\footnotesize{$1$}};
		\node at (.25,1.25) {\footnotesize{$8$}};
		\node at (.25, .5) {\footnotesize{$4$}};
		\node at (.25,-.25) {\footnotesize{$5$}};
		\node at (-.9,-1.25) {\footnotesize{$2$}};
		\node at (-.3,-1.25) {\footnotesize{$3$}};
		\node at (.3,-1.25) {\footnotesize{$6$}};
		\node at(.9,-1.25) {\footnotesize{$7$}};
	\end{tikzpicture}
\caption{An example of a $\Bc_{8}^5$-tree $T$ with $\des(T)=4$ and $\maj(T)=8$.}
\label{fig:B58}
\end{figure}

\begin{theorem}
Let $\Bc_n^k$ be the $S_n$-invariant connected building set on $[n]$ with minimal nonsingleton set of cardinality $k$.
The $q$-$h$-polynomial for the nestohedron $P_{\Bc_n^k}$ is 
	\[
	h_{\Bc_n^k}(t,q)=\sum_{A\in {{[n]}\choose {n-k+1}}}\sum_{\pi\in S_A}t^{\des(\pi)+|\{ j\in [n]\setminus A \, : \, j>\pi_1\}|}q^{\maj(\pi)+\des(\pi)+|\{ j\in [n]\setminus A \, : \, j>\pi_1\}|}
	\]
Moreover, this polynomial satisfies 
	\[
	h_{\Bc_n^k}(t,q)=t^{n-1}q^{\frac{k^2-2kn-k+n^2+3n-2}{2}}h_{\Bc_n^k}(t^{-1},q^{-1}).
	\]
\end{theorem}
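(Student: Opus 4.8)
The plan is to handle the explicit formula and the palindromicity relation in turn, both resting on the structural result of Proposition~\ref{prop:invartrees}. The first step is to set up a bijective parameterization of $\Bc_n^k$-trees. By Proposition~\ref{prop:invartrees}, every $\Bc_n^k$-tree is, as an unlabeled rooted tree, a chain on $n-k+1$ vertices together with the remaining $k-1$ vertices attached as leaves to the bottom vertex of that chain. Thus a $\Bc_n^k$-tree is specified precisely by a chain-set $A\in\binom{[n]}{n-k+1}$ together with the order in which the elements of $A$ sit along the chain; I encode the latter as $\pi\in S_A$ read \emph{upward}, so that $\pi_1$ is the bottom vertex (to which the $k-1$ leaves $[n]\setminus A$ attach) and $\pi_{n-k+1}$ is the root. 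That this is a bijection follows by unwinding Proposition~\ref{prop:BTreeConstruction}.

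Next I would translate $\des(T)$ and $\maj(T)$ into this data. A chain edge $(\pi_i,\pi_{i+1})$ is a descent of $T$ exactly when $\pi_i>\pi_{i+1}$, i.e. when $i\in\Des(\pi)$, while a leaf $\ell$ produces a descent exactly when $\ell>\pi_1$; hence $\des(T)=\des(\pi)+|\{j\in[n]\setminus A:j>\pi_1\}|$. For the major index I would first record the minimal rank function: since $\depth(T)=n-k+1$ and $\dpt(\pi_i)=(n-k+1)-i$, one gets $\rho(\pi_i)=i$ and $\rho(\ell)=0$ for each leaf $\ell$. Summing $\rho$ over the parent endpoints of descents, the chain descents contribute $\sum_{i\in\Des(\pi)}\rho(\pi_{i+1})=\sum_{i\in\Des(\pi)}(i+1)=\maj(\pi)+\des(\pi)$, while each leaf descent contributes $\rho(\pi_1)=1$, giving $\maj(T)=\maj(\pi)+\des(\pi)+|\{j\in[n]\setminus A:j>\pi_1\}|$. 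Feeding these into Proposition~\ref{prop:nesto} and summing over all $(A,\pi)$ yields the stated formula.

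For the functional equation I would invoke Theorem~\ref{thm:InvolutionPalindromic}. Since $\Bc_n^k$ is the collection of singletons together with all subsets of size at least $k$, it is invariant under $\omega(i)=n-i+1$, so $h_{\Bc_n^k}(t,q,u)=t^{n-1}h_{\Bc_n^k}(t^{-1},q^{-1},qu)$. The decisive observation is that $\mu(T)$ depends only on the unlabeled rooted-tree isomorphism type, which by Proposition~\ref{prop:invartrees} is identical for every $\Bc_n^k$-tree; hence $\mu(T)\equiv\mu_0$ for a single constant $\mu_0$, and $h_{\Bc_n^k}(t,q,u)=u^{\mu_0}h_{\Bc_n^k}(t,q)$. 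Substituting this into the trivariate identity and cancelling $u^{\mu_0}$ produces $h_{\Bc_n^k}(t,q)=t^{n-1}q^{\mu_0}h_{\Bc_n^k}(t^{-1},q^{-1})$. It then remains to evaluate $\mu_0=\sum_{(i,j)}\rho(j)$ over all edges: the chain edges contribute $\sum_{i=1}^{n-k}(i+1)$ and the $k-1$ leaf edges each contribute $\rho(\pi_1)=1$, and simplifying $\frac{(n-k+1)(n-k+2)}{2}+k-2$ gives exactly $\frac{k^2-2kn-k+n^2+3n-2}{2}$, which matches the claimed exponent.

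The main obstacle I anticipate is purely one of bookkeeping and convention: one must orient the chain so that $\pi_1$ is its bottom vertex rather than its root, and pin down the minimal rank function correctly, since an off-by-one error there would corrupt both the $\des$/$\maj$ dictionary and the value of $\mu_0$. Once the rank values $\rho(\pi_i)=i$ are fixed, the remaining algebra is routine.
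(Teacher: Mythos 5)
Your proposal is correct and follows essentially the same route as the paper: parameterize the $\Bc_n^k$-trees via Proposition~\ref{prop:invartrees} by a chain-set $A$ and a word $\pi\in S_A$ read upward from the attachment vertex, translate $\des$ and $\maj$ using the rank values $\rho(\pi_i)=i$ and $\rho(\ell)=0$ on leaves, and deduce the functional equation from Theorem~\ref{thm:InvolutionPalindromic} together with the fact that $\mu(T)$ is the same constant $\tfrac{(n-k+1)(n-k+2)}{2}+k-2$ for every tree. Your bookkeeping (including the evaluation of $\mu_0$) matches the paper's computation exactly.
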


Prior to giving the proof of this formula, it is instructive to give concrete example of enumerating the descents in $\Bc_n^k$-trees. 
Consider the $\Bc_8^5$-tree $T$ given in Figure~\ref{fig:B58}. 
The descents which occur along the chain are precisely the descents of the permutation $\pi=5481\in S_{\{1,4,5,8\}}$ which has $\Des(5418)=\{1,3\}$ and $\des(5418)=2$. 
Moreover, there are descents which occur between the antichain and the chain itself. 
The number of such descents is precisely the number of elements of $[8]\setminus\{1,4,5,8\}$ which are larger than $5$. There are precisely $2$, and hence yielding $\des(T)=\des(5481)+|\{j\in [8]\setminus\{1,4,5,8\} \ : \ j>5\}|=4$.
When computing the major index, we note that the contributions of $\pi=5418$ is $\sum_{i\in \Des(5418)}(i+1)=\maj(5418)+\des(5418)=4+2=6$, to account for the correct rank.
Moreover, every descent between the antichain and the chain has rank $1$, so this contributes a total of $2$. 
Thus, $\maj(T)=\maj(5418)+\des(5418)+|\{j\in [8]\setminus\{1,4,5,8\} \ : \ j>5\}|=8$.

\begin{proof}
By Proposition \ref{prop:invartrees}, we know that any $T$ has the poset structure of $A_{k-1}\oplus C_{n-k+1}$.
So any labeled tree is described by an $n-k+1$-element subset $A$ of $[n]$ and a permutation $\pi\in S_A$. The permutation labels $C_{n-k+1}$, and the remaining elements of $[n]\setminus A$ label the antichain $A_{k-1}$.
There are two types of descents in the labeling: descents in $C_{n-k+1}$ which are enumerated by $\des(\pi)$, and descents where a label on the antichain $A_{k-1}$ is greater than $\pi_1$ which is enumerated by $|\{ j\in [n]\setminus A \, : \, j>\pi_1\}|$.
To compute $\maj(T)$, note that if $i\in \Des(\pi)$ this corresponds to $(j,\ell)\in \Des(T)$ such that $\rho(\ell)=i+1$.
 So the contribution from descents of this form is $q^{\maj(\pi)+\des(\pi)}$. 
The other descents are of the form $(i,\pi_1)\in\Des(T)$ and since $\rho(\pi_1)=1$, this contributes $q^{|\{ j\in [n]\setminus A \, : \, j>\pi_1\}|}$.

To see the palidromicity statement, note that since $\Bc_n^k$ is $S_n$-invariant, then it is invariant under the involution $\omega(i)=n-i+1$.
It is clear that $\mu(T)=k-2+\sum_{i=1}^{n-k+1}i=\frac{k^2-2kn-k+n^2+3n-2}{2}$ for any $\Bc_n^k$-tree $T$.
Subsequently, applying the result of  Theorem \ref{thm:InvolutionPalindromic} and setting $u=1$ yields the desired statement. 
\end{proof}

\subsection{Graph associahedra}
We now consider a large family of examples of nestohedra arising from graphs.  
Given a graph $G=([n],E)$, a \Def{tube} of $G$ is a proper, nonempty subset $I\subset [n]$ such that the induced subgraph $G|_I$ is connected. 
A \Def{$k$-tubing} of $G$, $\chi$, is a a collection of $k$ distinct tubes subject to:
	\begin{enumerate}
	\item For all incomparable $A_1,A_2\in \chi$, $A_1\cup A_2\not\in \chi$ (\Def{non-adjacency});
	\item For all incomparable $A_1,A_2\in \chi$, $ A_1\cap A_2=\emptyset$ (\Def{non-intersecting}).
	\end{enumerate}	
We do, however, allow for $A_1\subset A_2$, which is called a  \Def{nesting}.
We say that a tubing $\chi$ is \Def{maximal} if it cannot add any additional tubes to $\chi$, or equivalently, if $|\chi|=n-1$.  
Given a graph $G$, the \Def{graph associahedron of $G$} is the polytope $P_G$ whose face lattice is given by the set of all tubings of $G$ where $\chi<\chi'$ if $\chi$ is obtained from $\chi'$ by adding tubes. 
Subsequently, the vertices of $P_G$ correspond to maximal tubings. 
This notion of graph associahedra originates with Carr and Devadoss \cite{CarrDevadoss,Devadoss-Realization} and has been a well-studied family of examples of simple generalized permutohedra (see, e.g., \cite{ArdilaReinerWilliams,BarnardMcConville,CardinalLangermanPerez-Lantero,CarrDevadossForcey,MannevillePilaud}).

\begin{remark}
\label{rmk:graphconnected}
Given a simple graph $G=([n],E)$, the graph associahedron $P_G$ is an example of nestohedron on a connected building set, even when $G$ is not a connected graph.
The \emph{graphical building set} of $G$, $\mathcal{B}(G)$ is the collection of nonempty $J\subseteq[n]$ such that the induced subgraph $G|_J$ is connected.
While the building set $B(G)$ is connected if and only if $G$ is connected (c.f. \cite[Ex. 6.2]{PostnikovReinerWilliams}), the graph associahedra $P_G$ using the notions of  Carr and Devadoss \cite{CarrDevadoss,Devadoss-Realization} is the nestahedron with building set $\widehat{\mathcal{B}(G)}=\mathcal{B}(G)\cup[n]$ which is always connected and $\widehat{\mathcal{B}(G)}=\mathcal{B}(G)$ if $G$ connected.  
\end{remark} 

In light of Remark \ref{rmk:graphconnected}, we can specialize Proposition \ref{prop:hpolyproduct} to determine the $h$-polynomial of a disconnected graph.  

\begin{corollary}
Let $G$ be a simple graph on $[n]$ with connected components $G_1,G_2,\ldots,G_k$. 
Then 
	\[
	h_{G}(t)=(1+t+\cdots+t^{k-1})\prod_{i=1}^kh_{G_i}(t).
	\]
\end{corollary}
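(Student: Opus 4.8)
The plan is to recognize the corollary as a direct specialization of Proposition~\ref{prop:hpolyproduct}, the only real work being to match the graphical building set of a disconnected graph to a combined connected building set. Let $S_1,\ldots,S_k$ be the vertex sets of the connected components $G_1,\ldots,G_k$, so that $S_1,\ldots,S_k$ partition $[n]$.

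First I would decompose the graphical building set along components. A nonempty $J\subseteq[n]$ lies in $\Bc(G)$ precisely when the induced subgraph $G|_J$ is connected, and a connected induced subgraph cannot meet two distinct components of $G$; hence every $J\in\Bc(G)$ is contained in a unique $S_i$ and satisfies $G_i|_J$ connected, while conversely every element of each $\Bc(G_i)$ is such a set. This gives the disjoint decomposition $\Bc(G)=\bigsqcup_{i=1}^k\Bc(G_i)$ over the pairwise disjoint ground sets $S_i$.

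Next I would observe that each $\Bc(G_i)$ is a \emph{connected} building set on $S_i$: it contains all singletons, it is closed under unions of intersecting members (the union of two connected subgraphs sharing a vertex is connected), and it contains $S_i$ itself because $G_i$ is connected. Consequently, by Remark~\ref{rmk:graphconnected}, the building set defining the graph associahedron $P_G$ is
\[
\widehat{\Bc(G)}=\Bc(G)\cup\{[n]\}=\left(\bigsqcup_{i=1}^k\Bc(G_i)\right)\sqcup\{[n]\},
\]
which is exactly the combined connected building set of $\Bc(G_1),\ldots,\Bc(G_k)$ in the sense of the paragraph preceding Proposition~\ref{prop:hpolyproduct}. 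When $G$ is disconnected, $[n]\notin\Bc(G)$, so adjoining $\{[n]\}$ genuinely produces the combined set; when $k=1$ the statement reduces to the connected case.

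Finally I would apply Proposition~\ref{prop:hpolyproduct} with $r=k$ and $\Bc_i=\widehat{\Bc(G_i)}=\Bc(G_i)$, obtaining
\[
h_{\widehat{\Bc(G)}}(t)=(1+t+\cdots+t^{k-1})\prod_{i=1}^k h_{\Bc(G_i)}(t).
\]
Since $h_G(t)=h_{\widehat{\Bc(G)}}(t)$ and, each $G_i$ being connected, $h_{G_i}(t)=h_{\widehat{\Bc(G_i)}}(t)=h_{\Bc(G_i)}(t)$, this is the claimed identity. I do not expect any genuine obstacle here; the one point deserving care is the bookkeeping of the hatted building sets and confirming that the graphical building set splits cleanly over components, so that the hypotheses of Proposition~\ref{prop:hpolyproduct}---pairwise disjoint ground sets, each piece connected---are exactly met.
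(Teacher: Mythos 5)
Your proposal is correct and follows exactly the route the paper intends: the corollary is stated as a direct specialization of Proposition~\ref{prop:hpolyproduct} via Remark~\ref{rmk:graphconnected}, and your verification that $\widehat{\Bc(G)}$ is the combined connected building set of the $\Bc(G_i)$ is precisely the (omitted) bookkeeping. Nothing is missing.
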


Let $G=([n],E)$ be a simple graph and let $\chi$ be a maximal tubing of $G$. 
Given $i\in [n]$, the \Def{nesting index} of $i$, denoted $\nu_\chi(i)$, is the number of tubes containing $i$. 
The \Def{nesting number} of $\chi$ is $\operatorname{nest}(\chi)\coloneqq \max_{i\in[n]}\nu_{\chi}(i)$.
Given any maximal $\chi$, observe that for any tube $A_j\in \chi$, there exists a unique element $\alpha_j\in A_j$ such that for any tube $A_k\subset A_j$, we have $\alpha_j\not\in A_k$.
For convenience, we will write $A_k\lessdot A_j$ if $A_k\subset A_j$ and there is no tube $A_\ell$ such that $A_k\subset A_\ell\subset A_\ell$.
Let $\alpha_n$ denote the unique element which is not contained in any tube of $\chi$. 

The \Def{nesting descent set} is
	\[
	\operatorname{NestDes}(\chi)\coloneqq \{(\alpha_k,\alpha_j) \, : \, \alpha_k>\alpha_j \mbox{ and } A_k\lessdot A_j \}\cup \{(\alpha_\ell,\alpha_n) \, : \, \alpha_\ell>\alpha_n \mbox{ and } A_\ell\not\subset A_p \mbox{ for any } A_p\}.
	\]	
The \Def{nesting descent number} is 
	\[
	\operatorname{nestDes}(\chi)\coloneqq |\operatorname{NestDes}(\chi)|
	\]
and the \Def{nesting major index} is
	\[
	\operatorname{nestMaj}(\chi)\coloneqq \sum_{(\alpha_k,\alpha_j)\in \operatorname{NestDes}(\chi)} \left( \operatorname{nest}(\chi)-\nu_{\chi}(\alpha_j)\right)
	\]

We now state a formula for the $q$-$h$-polynomial of graph associahedra in terms of graph tubings.

\begin{proposition}
Let $G$ be a simple graph. 
The $q$-$h$-polynomial is
	\[
	h_G(t,q)=\sum_\chi t^{\operatorname{nestDes}(\chi)} q^{\operatorname{nestMaj}(\chi)}
	\]
where the sum is taken over all maximal tubings $\chi$.	
\end{proposition}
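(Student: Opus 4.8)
The plan is to reduce the statement to Proposition~\ref{prop:nesto} by exhibiting a bijection between maximal tubings $\chi$ of $G$ and $\widehat{\mathcal{B}(G)}$-trees $T$ that carries $\operatorname{nestDes}$ to $\des$ and $\operatorname{nestMaj}$ to $\maj$. Recall from Remark~\ref{rmk:graphconnected} that $P_G$ is the nestohedron on the connected building set $\widehat{\mathcal{B}(G)} = \mathcal{B}(G)\cup\{[n]\}$, so Proposition~\ref{prop:nesto} already gives $h_G(t,q)=\sum_T t^{\des(T)}q^{\maj(T)}$, the sum ranging over $\widehat{\mathcal{B}(G)}$-trees. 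It therefore suffices to match tubings with trees and to check that the two pairs of statistics agree termwise.

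First I would set up the bijection. Given a maximal tubing $\chi$ with tubes $A_1,\ldots,A_{n-1}$, I send each tube $A_j$ to its distinguished element $\alpha_j$ (the unique element of $A_j$ lying in no proper sub-tube) and send the unmatched element $\alpha_n$ to the root; I declare $\alpha_k\lessdot_T\alpha_j$ exactly when $A_k\lessdot A_j$, and $\alpha_\ell\lessdot_T\alpha_n$ exactly when $A_\ell$ is a top-level tube. This is the standard identification of maximal nested sets with $\mathcal{B}$-trees: its defining property is $T_{\leq\alpha_j}=A_j$ for each $j$ and $T_{\leq\alpha_n}=[n]$, so the $\mathcal{B}$-tree axioms translate precisely into the tubing axioms (condition (1) becomes $T_{\leq\alpha_j}\in\widehat{\mathcal{B}(G)}$, and the non-adjacency and non-intersecting conditions become axiom (2)). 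Under this correspondence the cover relations and the natural-order comparisons appearing in $\Des(T)$ and in $\operatorname{NestDes}(\chi)$ coincide verbatim, so the two are literally the same set of ordered pairs and hence $\des(T)=\operatorname{nestDes}(\chi)$.

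For the major index I would identify the minimal rank function with the nesting index. The key observation is that the tubes containing an element $i=\alpha_m$ are precisely the $A_p$ with $\alpha_p$ a weak ancestor of $\alpha_m$ in $T$ other than the root, so $\nu_\chi(i)=\dpt(i)$; taking the maximum over all vertices gives $\operatorname{nest}(\chi)=\depth(T)$. Combining these with the Remark following the definition of $\maj(T)$ yields $\operatorname{nest}(\chi)-\nu_\chi(\alpha_j)=\depth(T)-\dpt(\alpha_j)=\rho(\alpha_j)$, so the summands of $\operatorname{nestMaj}(\chi)$ and $\maj(T)$ match termwise and $\operatorname{nestMaj}(\chi)=\maj(T)$. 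Substituting into Proposition~\ref{prop:nesto} then finishes the proof. The main obstacle is the careful bookkeeping in the bijection itself, in particular verifying that $T_{\leq\alpha_j}=A_j$ holds for the claimed tree and that every element of $[n]$ arises as exactly one distinguished element $\alpha_m$; once the tubes-to-subtrees dictionary is pinned down, both statistic identities follow by direct comparison of the defining sums.
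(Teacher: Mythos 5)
Your proof is correct and follows exactly the route the paper takes: the paper's own proof is the one-line remark that the result ``follows by unpacking the definitions of $\Bc$-trees in terms of graph tubings and applying Proposition~\ref{prop:nesto},'' and your argument is precisely that unpacking, carried out in detail (tube $\mapsto$ distinguished element, cover relations of tubes $\mapsto$ tree edges, $\nu_\chi=\dpt$, hence $\operatorname{nest}(\chi)-\nu_\chi(\alpha_j)=\rho(\alpha_j)$). No discrepancy to report.
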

\begin{proof}
This follows by unpacking the definitions of $\Bc$-trees in terms of graph tubings and applying Proposition \ref{prop:nesto}. 
\end{proof}

\begin{remark}
As was the case with nestohedra in general, we should note that this polynomial is invariant only under labeled graph automorphisms. 
Under most circumstance, a different choice of labeling of the vertices $G$ will produce a different bivariate polynomial. 
However, the specialization under $q=1$ is invariant under permutation of the ground set. 
\end{remark}

\begin{remark}
As with nestohedra, we can similarly define a trivariant polynomial for graph associahedra, namely
	\[
	h_G(t,q,u)=\sum_\chi t^{\operatorname{nestDes}(\chi)} q^{\operatorname{nestMaj}(\chi)}u^{\mu(\chi)}
	\]
where the sum ranges over all maximal and $\mu(\chi)=\sum_{(\alpha_k,\alpha_j)}(\operatorname{nest}(T)-\nu_T(\alpha_j))$ where this sum is over all pairs $(\alpha_k,\alpha_j)$ such that $A_k\lessdot A_j$, which is a direct translation of the $\mu$ statistic for nestohedra.
If the involution $\omega:[n]\to[n]$ such that $\omega(i)=n-i+1$ produces a labeled graph automorphism, then Theorem \ref{thm:InvolutionPalindromic} gives  us that palindromicity statement
	\[
	h_G(t,q,u)=t^{n-1}h_G(t^{-1},q^{-1},qu).
	\]
There are only two $S_n$ invariant graphs, namely the complete graph $K_n$ and the null graph $N_n=\overline{K_n}$ (i.e. the edgeless graph), which produce only the simplest examples of generalized permutohedra.
$P_{K_n}$ is the usual permutohedron $\Pi_n$, and hence $h_{K_n}(t,q)$ is the usual Euler--Mahonian polynomial.
$P_{N_n}$ is simply an $n-1$ dimensional simplex and thus $h_{N_n}(t,q)=\sum_{i=0}^{n-1}(tq)^i$. 	
\end{remark}

\subsection{The associahedron and a new $q$-analogue of Narayana numbers}
\label{sec:associahedron}
\newcommand{\Path}{\mathsf{Path}}%
The associahedron $\mathsf{A}(n)$, which first appeared in the work of \cite{Stasheff}, as well as the notable work of Lee \cite{Lee-Associahedron}, is the graph associahedron for $G=\Path(n)$, where the vertices are labeled linearly.
It is well-known that 
	\[
	h_{\Path(n)}(t)=\sum_{k=1}^{n}N(n,k) t^{k-1}
	\]
where $N(n,k)=\frac{1}{n}{n\choose k}{n\choose{k-1}}$ is the \Def{Narayana number}, which refine the Catalan numbers. 
That is, $h_{\Path(n)}(1)=C_n$.
To verify this formula, one should note that $\Bc$-trees, or graph tubings on $\Path(n)$, are in bijection with binary trees on $n$ vertices (See {\cite[Sec. 8.2]{Postnikov-Beyond}}).
The bijection sends descents in a $\Bc$-tree to right edges in an unlabeled binary tree and $N(n,k)$ is known to enumerate the number of unlabeled binary trees on $n$ vertices with $k-1$ right edges.
Subsequently, we will phrase all formulae in terms of binary trees. 

Let $T$ be a binary tree. 
Given an edge $e\in T$, let $\dpt(e)$ be the length of the path from the root vertex to the closest vertex incident with $e$.
Let $\depth(T)=\max_{e\in T} \dpt(e)$.
The \Def{right multiset} of $T$ is the multiset 
	\[
	\Rc(T)\coloneqq\left\{\dpt(e) \ : \ e \mbox{ is a right edge of } T \right\}.
	\]	
The \Def{right number} of $T$ is $r(T)=|\Rc(T)|$ and the \Def{right index} of $T$ is 
	\[
	\operatorname{rindex}(T)\coloneqq \depth(T)r(T)-\sum_{j\in \Rc(T)}j.
	\]	
By translating the general results for nestohedra into the above language for binary trees, we have the following:
	\begin{corollary}
	The $q$-$h$-polynomial for the associahedron is 
		\[
		h_{\Path(n)}(t,q)=\sum_{T} t^{r(T)}q^{\operatorname{rindex}(T)}
		\]
	where the sum ranges over all rooted unlabelled binary tree $T$ on $n$ vertices.
	\end{corollary}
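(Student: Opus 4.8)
The plan is to reduce the statement to the general nestohedron formula of Proposition~\ref{prop:nesto} and then transport it across the standard bijection between graph tubings of $\Path(n)$ and binary trees. Since the associahedron is the graph associahedron of $\Path(n)$, its building set is $\Bc=\widehat{\Bc(\Path(n))}$, and Proposition~\ref{prop:nesto} gives $h_{\Path(n)}(t,q)=\sum_{T}t^{\des(T)}q^{\maj(T)}$, where $T$ ranges over $\Bc$-trees. Recalling (see {\cite[Sec.~8.2]{Postnikov-Beyond}}) that $\Bc$-trees for the path are in bijection with unlabelled binary trees on $n$ vertices, it suffices to verify, term by term, that under this bijection $\des(T)=r(T)$ and $\maj(T)=\operatorname{rindex}(T)$; in particular I must check that each summand depends only on the unlabelled binary tree and not on the path labelling, which is what makes the sum well defined over unlabelled trees.

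First I would make the bijection explicit. The tubes of $\Path(n)$ are exactly the intervals of $[n]$, so by Proposition~\ref{prop:BTreeConstruction} a $\Bc$-tree is built by choosing a root $i$, which splits $[n]\setminus\{i\}$ into the two subintervals $[1,i-1]$ and $[i+1,n]$, and recursing on each piece. This is precisely the recursive description of a binary search tree: the shape of the rooted tree determines the labelling uniquely, and each node has a (possibly empty) left subtree carrying the smaller labels and a right subtree carrying the larger labels. Consequently a child $i$ of a parent $j$ satisfies $i<j$ exactly when $i$ is the root of the left subtree of $j$, and $i>j$ exactly when it is the root of the right subtree. Matching this against the definition $\Des(T)=\{(i,j):i\lessdot_T j,\ i>_{\N}j\}$, the descent edges are exactly the right edges of the associated binary tree, so $\des(T)=r(T)$; moreover this identification is purely structural, which is what forces the summand to be label-independent.

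For the major index I would use the Remark that the minimal rank function satisfies $\rho(x)=\depth(T)-\dpt(x)$, so that $\maj(T)=\sum_{(i,j)\in\Des(T)}\rho(j)=\sum_{(i,j)\in\Des(T)}\bigl(\depth(T)-\dpt(j)\bigr)$. Each descent $(i,j)$ corresponds to a right edge $e$ whose endpoint closer to the root is the parent $j$, so the value recorded in the right multiset $\Rc(T)$ is $\dpt(j)$. Summing over right edges then yields $\maj(T)=\depth(T)\,r(T)-\sum_{j\in\Rc(T)}j=\operatorname{rindex}(T)$, as claimed. The one delicate point, and the step I expect to require the most care, is reconciling the two depth conventions: the right index is phrased through the edge depths and the tree depth of the binary tree, whereas the rank function $\rho$ is phrased through the vertex depths of the poset, and one must confirm that these normalizations agree exactly (rather than differing by a constant shift proportional to $r(T)$) so that $\maj(T)$ and $\operatorname{rindex}(T)$ coincide on the nose.
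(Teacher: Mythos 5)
Your overall route is the same as the paper's: the paper justifies this corollary only by ``translating the general results for nestohedra into the above language for binary trees,'' i.e., by invoking Proposition~\ref{prop:nesto} together with the bijection between $\Bc$-trees for the path and binary trees, which is exactly your plan. Your identification of $\Bc$-trees for $\mathsf{Path}(n)$ with binary search trees, and of descent edges with right edges (hence $\des(T)=r(T)$ and label-independence of each summand), is correct and supplies the details the paper omits for the $t$-exponent.

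The gap is in the final step, and it sits exactly at the point you flag as delicate but then do not actually resolve. In your chain
\[
\maj(T)=\sum_{(i,j)\in\Des(T)}\bigl(\depth(T)-\dpt(j)\bigr)=\depth(T)\,r(T)-\sum_{j\in\Rc(T)}j=\operatorname{rindex}(T),
\]
the symbol $\depth(T)$ silently changes meaning between the second and third expressions. In the nestohedron definition of $\maj$, $\depth(T)=\max_{x\in T}\dpt(x)$ is a maximum over \emph{vertices}; in the definition of $\operatorname{rindex}$ it is $\max_{e\in T}\dpt(e)$, a maximum over \emph{edges}, where $\dpt(e)$ is the depth of the endpoint of $e$ nearer the root. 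For any tree with an edge, the deepest edge has its parent one level above the deepest vertex, so the edge-based depth is exactly one less than the vertex-based depth; since each of the $r(T)$ right edges picks up one copy of $\depth(T)$, the paper's literal definitions give $\maj(T)=\operatorname{rindex}(T)+r(T)$, not $\maj(T)=\operatorname{rindex}(T)$. (Test case: the $\Bc$-tree $1\lessdot 2\lessdot 3$ rooted at $1$, i.e.\ the all-right chain, has $\maj=2+1=3$ but $\operatorname{rindex}=1\cdot 2-(0+1)=1$.) So the verification you defer is not a formality --- it fails as stated. To close the argument you must either prove the identity in the corrected form $h(t,q)=\sum_T t^{r(T)}q^{\operatorname{rindex}(T)+r(T)}$, or read $\depth(T)$ in the definition of $\operatorname{rindex}$ as the vertex depth $\max_{x}\dpt(x)=1+\max_{e}\dpt(e)$, after which your computation goes through verbatim. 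This mismatch appears to be an inconsistency in the paper's own definitions rather than a flaw in your strategy, but as written your proof asserts the one equality that does not hold and which you yourself identified as the crux.
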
	
	
\begin{remark}
This theorem gives rise to \emph{a} $q$-analogue of the Narayana numbers.
We say the \Def{(alternative) $q$-Narayana number} is 
	\[
	N(n,k,q)=\sum_{\substack{T\\r(T)=k-1}}q^{\operatorname{rindex}(T)}.
	\]
It is clear that the substitution $q=1$ yields $N(n,k)$ as desired. 
We call these the \emph{alternative} $q$-Narayana numbers because, while this is the natural $q$-analogue in the context of generalized permutohedra as it arises from the major index, this does not agree with the usual $q$-Narayana number in the literature (see, e.g., \cite{MR2047757, ReinerSommer-qKerewera} ).
\end{remark}	

\subsection{The stellahedron}
The \Def{star graph} on $n+1$ vertices is the complete bipartite graph $K_{1,n}$.
The \Def{stellohedron} is the graph associahedron associated to $K_{1,n}$. 
Let $K_{1,n}$ be labeled such that the center vertex is labeled $n+1$.
The $\Bc$-trees for $K_{1,n}$ are in bijection with partial permutations of $[n]$.
In particular, the structure of a $\Bc$-tree is given by the ordinal sum of an antichain with a totally ordered chain $A_{n-k-1}\oplus C_{k+1}$ for some $k=0,\ldots,n$ such that the minimal element of $C_{k+1}$ has label $n+1$.

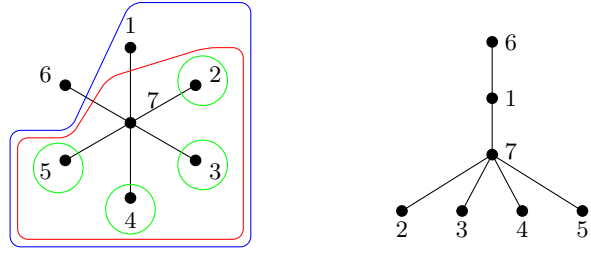
\begin{figure}
	\begin{tikzpicture}
	\draw[fill=black] (0,0) circle(2pt);
	\draw[fill=black] (0,1) circle(2pt);
	\draw[fill=black] (0,-1) circle(2pt);
	\draw[fill=black] (.866,.5) circle(2pt);
	\draw[fill=black] (.866,-.5) circle(2pt);
	\draw[fill=black] (-.866,-.5) circle(2pt);
	\draw[fill=black] (-.866,.5) circle(2pt);
	
	\draw[black] (0,1)--(0,-1);
	\draw[black] (.866,.5)--(-.866,-.5);
	\draw[black] (-.866,.5)--(.866,-.5);

	\node at (1.126,.65) {\footnotesize{$2$}};	
	\node at (1.126,-.65) {\footnotesize{$3$}};	
	\node at (-1.126,-.65) {\footnotesize{$5$}};	
	\node at (-1.126,.65) {\footnotesize{$6$}};	
	\node at (0,1.3) {\footnotesize{$1$}};
	\node at (0,-1.3) {\footnotesize{$4$}};
	\node at (.3,.3) {\footnotesize{$7$}};
	
	\draw[blue,rounded corners] (0,1.6)--(1.6,1.6)--(1.6,-1.65)--(-1.6,-1.65)--(-1.6,-.1)--(-.8,-.1)--cycle;
	
	\draw[red,rounded corners] (-.8,-.2)--(-.3,.6)--(1,1)--(1.5,1)--(1.5,-1.55)--(-1.5,-1.55)--(-1.5,-.2)--cycle;	
	
	\draw[green] (.96,.56) circle[radius=.33];
	\draw[green] (.96,-.56) circle[radius=.33];
	\draw[green] (-.96,-.6) circle[radius=.33];
	\draw[green] (0,-1.15) circle[radius=.33];	
	\end{tikzpicture}
	\hspace{1.6cm}
	\begin{tikzpicture}
	\draw[fill=black] (0,1.75) circle(2pt);
	\draw[fill=black] (0,1) circle(2pt);
	\draw[fill=black] (0,.25) circle(2pt);
	\draw[fill=black] (-1.2,-.5) circle(2pt);
	\draw[fill=black] (-.4,-.5) circle(2pt);
	\draw[fill=black] (.4,-.5) circle(2pt);
	\draw[fill=black] (1.2,-.5) circle(2pt);
	
	\draw[black] (0,1.75)--(0,.25);
	\draw[black] (0,.25)--(-1.2,-.5);
	\draw[black] (0,.25)--(-.4,-.5);
	\draw[black] (0,.25)--(.4,-.5);
	\draw[black] (0,.25)--(1.2,-.5);
	
	\node at (-1.2,-.75) {\footnotesize{$2$}};
	\node at (-.4,-.75) {\footnotesize{$3$}};
	\node at (.4,-.75) {\footnotesize{$4$}};
	\node at (1.2,-.75) {\footnotesize{$5$}};
	\node at (.25,1.75) {\footnotesize{$6$}};
	\node at (.25,1) {\footnotesize{$1$}};
	\node at (.25,.3) {\footnotesize{$7$}};	
	\end{tikzpicture}
\caption{A tubing of $K_{1,6}$ and its corresponding $\Bc$-tree.}
\label{fig:star}	
\end{figure}

To see this, note that we can identify the $\Bc$-trees with graph tubings. 
Any tubing of $K_{1,n}$ is either (i) the tubing where each vertex $i=1,2,\ldots, n$ is in a singleton tube and $n+1$ is the root, or (ii) some vertex $i$ is the root and we have a tube containing all other vertices.
In the case of (ii), once  $i$ is chosen, then the tubing directly arises from a tubing of $K_{1,n-1}$ on the labels $[n+1]\setminus \{i\}$. 
Thus, by induction, we will have $\Bc$-trees of the proposed form.
For example, consider the tubing and $\Bc$-tree given in Figure~\ref{fig:star}, which corresponds to the partial permutation $\pi=61$ on $[6]$.

Subsequently, the elements of the $C_{k+1}$ above the $n+1$ are the partial permutation (see {\cite[Sec. 10.4]{PostnikovReinerWilliams}})
With this in mind, we can state the $q$-analogue of the $h$-polynomial for the stellohedron.

\begin{proposition}
The $q$-$h$-polynomial for the stellohedron is 
	\[
	h_{K_{1,n}}(t,q)=1+\sum_{w}t^{\des(w)+1}q^{\maj(w)+2\des(w)+2}
	\]
where the sum is over all nonempty partial permutations of $[n]$.	
\end{proposition}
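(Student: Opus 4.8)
The plan is to begin from Proposition~\ref{prop:nesto}, which writes $h_{K_{1,n}}(t,q)=\sum_{T}t^{\des(T)}q^{\maj(T)}$ as a sum over the $\Bc$-trees of $K_{1,n}$, and then to transport the bijection between these trees and partial permutations established in the preceding discussion all the way to the pair of statistics $\des$ and $\maj$. Recall that each such $T$ has poset type $A_{n-k}\oplus C_{k+1}$: the chain $C_{k+1}$ carries the center $n+1$ at its bottom together with an ordered word $w=w_1\cdots w_k$ of distinct elements of $[n]$, read from the cover of $n+1$ up to the root, while the $n-k$ remaining elements form the antichain $A_{n-k}$, each attached directly to $n+1$. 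The empty word corresponds to $k=0$, namely the tree with root $n+1$ whose leaves are all of $[n]$; as no leaf label exceeds $n+1$, this tree has $\des(T)=\maj(T)=0$ and supplies the isolated summand $1$. It then remains to show, for each nonempty $w$, that $\des(T)=\des(w)+1$ and $\maj(T)=\maj(w)+2\des(w)+2$.

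For the descent count I would partition the edges of $T$ into three types: the edges joining $n+1$ to the antichain leaves, the single edge from $n+1$ up to $w_1$, and the internal chain edges joining $w_i$ to $w_{i+1}$. A leaf edge is never a descent since every leaf label lies in $[n]$ and is thus below $n+1$; the edge $(n+1,w_1)$ is always a descent since $n+1$ exceeds every element of $[n]$; and an internal edge contributes a descent exactly when $w_i>w_{i+1}$, that is, precisely at the descents of the word $w$. Hence $\des(T)=\des(w)+1$, matching the exponent of $t$.

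For the major index I would apply the identity $\rho(x)=\depth(T)-\dpt(x)$ from the Remark following Proposition~\ref{prop:nesto}. When the antichain is nonempty the deepest vertices are the leaves, so $\depth(T)=k+1$, giving $\rho(n+1)=1$ and $\rho(w_i)=i+1$ along the chain. A descent of $w$ at position $i$ contributes the rank of its upper endpoint $w_{i+1}$, namely $i+2$, so the chain descents contribute $\sum_{i\in\Des(w)}(i+2)=\maj(w)+2\des(w)$, while the forced descent $(n+1,w_1)$ contributes $\rho(w_1)=2$. Summing yields $\maj(T)=\maj(w)+2\des(w)+2$, and summing $t^{\des(T)}q^{\maj(T)}$ over all nonempty $w$ then gives the claimed identity.

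I expect the main obstacle to be exactly this rank bookkeeping, and most acutely the boundary case $k=n$ in which the antichain is empty. There the deepest vertex is $n+1$ itself, so $\depth(T)=n=k$ drops by one relative to the generic case; every rank along the chain falls by $1$, and since $T$ has $\des(w)+1$ descents the exponent of $q$ produced by the same computation is $\maj(w)+\des(w)+1$ rather than $\maj(w)+2\des(w)+2$. I would therefore treat the length-$n$ words as a separate top-dimensional term and check carefully whether they can be reconciled with the uniform exponent in the statement; this reconciliation is the single step where the claimed closed form most needs justification.
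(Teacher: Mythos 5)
Your argument for the generic case is exactly the paper's: identify the chain labels above $n+1$ with a partial permutation $w$, note the forced descent at the edge covering $n+1$, and convert positions in $\Des(w)$ into ranks via $\rho(x)=\depth(T)-\dpt(x)$. The worry you raise about the empty antichain is not a technicality you failed to close --- it is a genuine error in the stated proposition, and the paper's proof silently assumes the antichain is nonempty: the assertions $\rho(w_1)=2$ and ``rank $i+2$ for a descent at position $i$'' require $\depth(T)=k+1$, i.e.\ at least one leaf hanging below $n+1$. When $w$ is a full permutation of $[n]$ the deepest vertex is $n+1$ itself, so $\depth(T)=n$, every rank along the chain drops by one, and since $T$ has $\des(w)+1$ descents the exponent of $q$ becomes $\maj(w)+\des(w)+1$, exactly as you computed. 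These terms cannot be reconciled with the uniform exponent. Already for $n=2$, where $P_{K_{1,2}}$ is a pentagon, a direct computation over the five $\Bc$-trees gives
\[
h_{K_{1,2}}(t,q)=1+tq+2tq^{2}+t^{2}q^{3},
\]
whereas the displayed formula yields $1+3tq^{2}+t^{2}q^{5}$: the two trees whose chain contains all of $\{1,2\}$ contribute $tq+t^{2}q^{3}$, not $tq^{2}+t^{2}q^{5}$. (The two polynomials agree at $q=1$, which is why the error is invisible at the level of the ordinary $h$-polynomial.)

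So do not try to absorb the length-$n$ words into the uniform exponent; split them off. The corrected statement, which your case analysis proves, is
\[
h_{K_{1,n}}(t,q)=1+\sum_{0<|w|<n}t^{\des(w)+1}q^{\maj(w)+2\des(w)+2}+\sum_{|w|=n}t^{\des(w)+1}q^{\maj(w)+\des(w)+1},
\]
where $|w|$ denotes the length of the partial permutation. With that amendment your proof is complete: $\des(T)=\des(w)+1$ holds uniformly in both cases (leaf edges never produce descents, the edge above $n+1$ always does), and the only case-dependence is the one-unit shift of $\depth(T)$, hence of every rank on the chain.
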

\begin{proof}
The labels on $C_{k+1}$ correspond to a partial permutation of $\tilde{w}$ of $[n+1]$ where $\tilde{w}_1=n+1$.
Thus, we consider $w$ to be the partial permutation of $[n]$ with this first element omitted. 
If $w=\emptyset$, the corresponding $\Bc$-tree has no descents.
If $w\neq \emptyset$, then the corresponding $\Bc$-tree $T$ has precisely $\des(w)+1$ descents, due to the guaranteed descent between $n+1$ and $w_1$.
When computing the major index, note that if $i\in \Des(w)$, this means that we have an element of rank $i+2$ where a descent occurs in $T$.
Hence, the contribution to the major index is $\sum_{i\in\Des(w)}(i+2)=\maj(w)+2\des(w)$. 
Additionally, the descent between $n+1$ and $w_1$ contributes $2$, as $\rho(w_1)=2$. 
Thus, we have the desired formula.  
\end{proof}

\subsection{The Stanley--Pitman polytope}
Introduced by Stanley and Pitman in \cite{StanleyPitman}, the \Def{Stanley--Pittman polytope} is a integral polytope defined by the equations
	\[
	\mathsf{PS}(n)\coloneqq \left\{\x\in\R^n \, : \, x_i\geq 0 \mbox{ and } \sum_{i=1}^jx_i \leq j \mbox{ for each } 1\leq j \leq n\right\}.
	\]
This polytope is combinatorially equivalent to an $n$-cube, as illustrated in Figure~\ref{fig:PS}.
However, this polytope is of particular interest  as it appears naturally when studying empirical distributions in statistics and has connections to many combinatorial objects, such as parking functions and plane trees. 
Postnikov {\cite[Sec. 8.5]{Postnikov-Beyond}} observed that this polytope can be realized as the nestohedron from the building set 
\newcommand{\BPS}{\mathcal{B}_{\mathsf{PS}}}%
	\[
	\BPS=\{ [i,n], \ \{i\} \ : \ i\in [n] \},
	\]
where $[i,n]=\{i,i+1,\ldots,n\}$. 
Notably, this is not a graph associahedron. 
Given that this polytope is combinatorially equivalent to an $n$-cube, we have $h_{\BPS}(t)=(1+t)^{n-1}$  {\cite[Thm. 20]{StanleyPitman}}.
We now give the $q$-analogue.

 \begin{figure}
	\centering
\begin{tikzpicture}
	\draw[black,thick,->] (0,0)--(2,0) node[anchor=west]{$x_1$};
	\draw[black,thick,->] (0,0)--(0,3) node[anchor=west]{$x_2$};
	
	\draw[fill=red] (0,0) circle(2pt);
	\draw[fill=red] (1,0) circle(2pt);
	\draw[fill=red] (1,1) circle(2pt);
	\draw[fill=red] (0,2) circle(2pt);
	
	\draw[fill=gray] (0,1) circle(2pt);
	
	\draw[red,thick] (0,0)--(1,0)--(1,1)--(0,2)--cycle;
	
	\draw[fill=red,opacity=0.3] (0,0)--(1,0)--(1,1)--(0,2)--cycle;

\end{tikzpicture}
\hspace{1.5cm}	
	\tdplotsetmaincoords{60}{130}
\begin{tikzpicture}
[tdplot_main_coords,
			grid/.style={very thin,gray},
			axis/.style={->,black,thick}]

	\draw[axis] (0,0,0) -- (3.5,0,0) node[anchor=west]{$x_1$};
	\draw[axis] (0,0,0) -- (0,3.5,0) node[anchor=west]{$x_2$};
	\draw[axis] (0,0,0) -- (0,0,3.5) node[anchor=west]{$x_3$};
    
    \draw[fill=blue] (0,0,0) circle(2pt);
    \draw[fill=blue] (1,0,0) circle(2pt);
    \draw[fill=blue] (1,1,0) circle(2pt);
    \draw[fill=blue] (0,2,0) circle(2pt);
    \draw[fill=blue] (1,1,1) circle(2pt);
    \draw[fill=blue] (1,0,2) circle(2pt);
    \draw[fill=blue] (0,2,1) circle(2pt);
    \draw[fill=blue] (0,0,3) circle(2pt);
    
	\draw[fill=gray] (0,0,1) circle(2pt);    
    \draw[fill=gray] (0,0,2) circle(2pt);
    \draw[fill=gray] (0,1,0) circle(2pt);
    \draw[fill=gray] (0,1,1) circle(2pt);
    \draw[fill=gray] (0,1,2) circle(2pt);
    \draw[fill=gray] (1,0,1) circle(2pt);
    
    \draw[blue,thick] (0,0,0)--(1,0,0)--(1,1,0)--(0,2,0)--cycle;
    \draw[blue,thick] (0,0,3)--(1,0,2)--(1,1,1)--(0,2,1)--cycle;
    \draw[blue,thick] (0,0,0)--(0,0,3);
	\draw[blue,thick] (1,0,0)--(1,0,2);
	\draw[blue,thick] (1,1,0)--(1,1,1);
	\draw[blue,thick] (0,2,0)--(0,2,1);

    \draw[fill=blue,opacity=0.3](0,0,0)--(1,0,0)--(1,1,0)--(0,2,0)--cycle;
    \draw[fill=blue,opacity=0.3](0,0,3)--(1,0,2)--(1,1,1)--(0,2,1)--cycle;
    \draw[fill=blue,opacity=0.3](0,0,0)--(1,0,0)--(1,0,2)--(0,0,3)--cycle;
    \draw[fill=blue,opacity=0.3](0,0,0)--(0,2,0)--(0,2,1)--(0,0,3)--cycle;
    \draw[fill=blue,opacity=0.3](1,0,0)--(1,1,0)--(1,1,1)--(1,0,2)--cycle;
    \draw[fill=blue,opacity=0.3](1,1,0)--(1,1,1)--(0,2,1)--(0,2,0)--cycle;
    
\end{tikzpicture}
\caption{$\mathsf{PS}(2)$ and $\mathsf{PS}(3)$}
\label{fig:PS}
\end{figure}

	\begin{proposition}
	The $q$-$h$-polynomial for the Stanley--Pitman polytope is 
		\[
		h_{\BPS}(t,q)=\sum_{\ell=0}^{n-2} {{n-2} \choose \ell} t^\ell q^{\frac{\ell^2+3\ell+2}{2}}\left( t+q^\ell\right).
		\]
	\end{proposition}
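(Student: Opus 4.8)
The plan is to invoke Proposition~\ref{prop:nesto}, which reduces the claim to evaluating $\sum_T t^{\des(T)}q^{\maj(T)}$ over all $\BPS$-trees $T$, so the first task is an explicit description of these trees. Applying Proposition~\ref{prop:BTreeConstruction} together with the observation that, for each $i$, the restriction $\BPS|_{[n]\setminus\{i\}}$ has connected components $\{i+1,\ldots,n\}$ (again a Stanley--Pitman building set, on a shorter interval) together with the singletons $\{1\},\ldots,\{i-1\}$, I would prove by induction that every $\BPS$-tree is a caterpillar: its labels split into an increasing \emph{spine} $i_1<i_2<\cdots<i_m=n$ (a subset of $[n]$ always containing $n$, with $i_1$ the root), while each non-spine label $v$ is a leaf attached to the unique spine vertex $i_k=\min\{i_j:i_j>v\}$. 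In particular the $\BPS$-trees are indexed by the subsets $\{i_1,\ldots,i_{m-1}\}\subseteq[n-1]$, recovering the $2^{n-1}$ vertices of the cube.

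From this description both statistics can be read off. Since every leaf label is strictly smaller than the spine vertex it hangs below, leaf edges are never descents, whereas each spine edge $i_{k+1}\lessdot_T i_k$ is a descent because the spine increases; hence $\des(T)=m-1$. For the major index I would use the remark identifying $\depth(T)-\dpt(j)$ with the minimal rank $\rho(j)$: the spine vertex $i_k$ has depth $k-1$, so summing over the spine descents gives $\maj(T)=\sum_{k=1}^{m-1}\bigl(\depth(T)-(k-1)\bigr)=(m-1)\depth(T)-\binom{m-1}{2}$.

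The subtle point, and the main obstacle, is that $\maj(T)$ is not determined by $m$ alone: the leaves hanging below the bottom spine vertex $i_m=n$ sit at depth $m$, while every other vertex has depth at most $m-1$, so $\depth(T)$ jumps according to whether $n$ has any leaf children. That set of leaves is $\{i_{m-1}+1,\ldots,n-1\}$, which is empty exactly when $n-1$ lies on the spine. I would therefore partition the trees accordingly. When $n-1$ is a spine vertex we have $\depth(T)=m-1$, and the admissible spines of size $m$ are obtained by choosing the $m-2$ interior spine vertices freely from $[n-2]$, giving $\binom{n-2}{m-2}$ trees; when $n-1$ is a leaf we instead have $\depth(T)=m$, and there are $\binom{n-2}{m-1}$ spines of size $m$. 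In both families $\maj(T)$ is now a fixed quadratic function of $m$ via the formula above.

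Finally I would reindex the two families by a single parameter $\ell$, taking $m=\ell+2$ in the first case (so $\des=\ell+1$) and $m=\ell+1$ in the second (so $\des=\ell$), so that both carry the binomial weight $\binom{n-2}{\ell}$. Factoring out the common power of $q$, the $n-1$-on-spine family supplies the monomial $t$ and the $n-1$-a-leaf family supplies the remaining $q$-power, assembling into the single product $\binom{n-2}{\ell}\,t^{\ell}q^{(\ell^2+3\ell+2)/2}(t+q^{\ell})$ after summing over $\ell=0,\ldots,n-2$. I expect the structural input — the caterpillar description, the dichotomy $\depth(T)\in\{m-1,m\}$, and the two binomial counts — to go through cleanly, with the only genuine care needed in tracking how the two depth values feed through the rank function $\rho$ into the exponent of $q$ when the two contributions are combined.
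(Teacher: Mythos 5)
Your structural analysis coincides with the paper's own proof: the caterpillar description of the $\mathcal{B}_{\mathsf{PS}}$-trees, the observation that only spine edges are descents (so $\des(T)=m-1$), the formula $\maj(T)=(m-1)\depth(T)-\binom{m-1}{2}$, the dichotomy $\depth(T)\in\{m-1,m\}$ according to whether $n-1$ lies on the spine, and the two binomial counts $\binom{n-2}{m-2}$ and $\binom{n-2}{m-1}$ are all correct. The gap is in the final assembly, and it is not something you can absorb by ``factoring out the common power of $q$'': plugging your own formula into the second family ($n-1$ a leaf, so $m=\ell+1$, $\depth(T)=\ell+1$, $\des(T)=\ell$) gives $\maj(T)=\ell(\ell+1)-\binom{\ell}{2}=\frac{\ell^2+3\ell}{2}$, whereas the displayed proposition needs that family to contribute $q^{\frac{\ell^2+3\ell+2}{2}+\ell}=q^{\frac{\ell^2+5\ell+2}{2}}$. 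The exponents differ by $\ell+1$, so what your computation actually yields is $\binom{n-2}{\ell}t^{\ell}q^{\ell(\ell+3)/2}\left(1+tq\right)$, not $\binom{n-2}{\ell}t^{\ell}q^{(\ell^2+3\ell+2)/2}\left(t+q^{\ell}\right)$; factoring out $q^{(\ell^2+3\ell+2)/2}$ from your two terms would leave $t+q^{-1}$, not $t+q^{\ell}$.

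In fact your (correct) intermediate computations show the proposition as stated is false, and the discrepancy traces to the paper's own proof: in its case (ii) it asserts $\maj(T)=(\ell+1)^2-\sum_{i=0}^{\ell-1}i$, which charges $\ell+1$ descents of weight $\depth(T)=\ell+1$ even though that family has only $\ell$ descents; the correct value is $\ell(\ell+1)-\sum_{i=0}^{\ell-1}i=\frac{\ell^2+3\ell}{2}$. A direct check at $n=3$ settles it: the four trees, with spines $\{3\}$, $\{2,3\}$, $\{1,3\}$, $\{1,2,3\}$, contribute $1$, $tq$, $tq^{2}$, $t^{2}q^{3}$ respectively, giving $1+tq+tq^{2}+t^{2}q^{3}$; this agrees with $\sum_{\ell}\binom{1}{\ell}t^{\ell}q^{\ell(\ell+3)/2}(1+tq)$ but not with the displayed formula, which evaluates to $q+tq+tq^{4}+t^{2}q^{3}$. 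So rather than forcing your two families into the stated product, you should carry your own formulas through and conclude with the corrected closed form $\sum_{\ell=0}^{n-2}\binom{n-2}{\ell}t^{\ell}q^{\ell(\ell+3)/2}(1+tq)$, which still specializes to $(1+t)^{n-1}$ at $q=1$.
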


\begin{proof}
First note that $h_{\BPS}(t,1)=(t+1)^{n-1}$, so this agrees with the known results. To compute this, we will need $\BPS$-trees, which as  determined by Postnikov, Reiner, and Williams {\cite[Sec. 10.5]{PostnikovReinerWilliams}},
are formed in the following way. 
Given any increasing sequence of positive integers $I=\{i_1<i_2<\cdots<i_k=n\}$ where we let $i_1$ be the root and form the chain of edges $(i_1,i_2),(i_2,i_3),\ldots, (i_{k-1},i_k)$ and for all $j\in [n]\setminus I$ we have the edge $(i_s,j)$ where $i_s$ is the minimal element of $I$ such that $i_s>j$.
An example can be seen in Figure~\ref{fig:PSTrees}.

It is clear that all descents will be occur along the chain of edges. So, we must consider two cases: (i) $i_{k-1}=n-1$ and (ii) $i_{k-1}\leq n-2$.
In case (i), for convenience let $\ell=k-2$. 
We form a tree $T$ by choosing a subset $J\in {{[n-2]}\choose \ell}$ and arranging it increasing order to form a chain of edges which ends in $(i_\ell,n-1),(n-1,n)$. 
By definition, $\depth(T)=\ell+1$, $\des(T)=\ell+1$, and $\maj(T)=(\ell+1)^2-\sum_{i=0}^\ell i = \frac{\ell^2+3\ell+2}{2}.$
So, the contribution of trees of this form the $q$-$h$-polynomials is
	\begin{equation}
	\label{eq:PStree1}
	\sum_{\ell=0}^{n-2} {{n-2}\choose \ell} t^{\ell+1} q^{\frac{\ell^2+3\ell+2}{2}}.
	\end{equation}

In case (ii) where $i_{k-1}\neq n-1$, for ease of notation, let $\ell=k-1$.
Similarly, we form such a tree $T$ by choosing $J\in {{[n-2]}\choose \ell}$ and arranging it increasing order to form a chain of edges which ends in $(i_\ell,n)$. Note that, when including the elements not in the chain, we gain edges from the vertex $n$ going away from the root, in particular, the edge $(n,n-1)$. 
So, we again have $\depth(T)=\ell+1$. 
However, we now have  $\des(T)=\ell$, and $\maj(T)=(\ell+1)^2-\sum_{i=0}^{\ell-1} i = \frac{\ell^2+5\ell+2}{2}.$
So the contribution of trees of this type to the $q$-$h$-polynomial is 
	\begin{equation}
	\label{eq:PStree2}
	\sum_{\ell=0}^{n-2} {{n-2}\choose \ell} t^{\ell} q^{\frac{\ell^2+5\ell+2}{2}}.
	\end{equation}

Summing (\ref{eq:PStree1}) and (\ref{eq:PStree2}) and simplifying gives the desired expression.	
\end{proof}

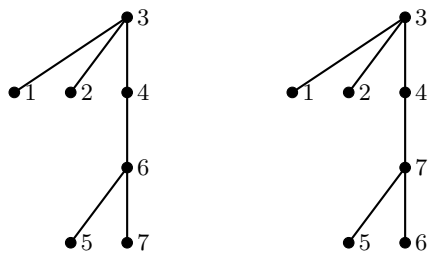
\begin{figure}
	\begin{tikzpicture}
	\draw[fill=black] (0,2) circle(2pt) node[anchor=west]{\footnotesize $3$};
	\draw[fill=black] (0,1) circle(2pt) node[anchor=west]{\footnotesize $4$};
	\draw[fill=black] (0,0) circle(2pt) node[anchor=west]{\footnotesize $6$};
	\draw[fill=black] (0,-1) circle(2pt) node[anchor=west]{\footnotesize $7$};

	\draw[fill=black] (-.75,-1) circle(2pt) node[anchor=west]{\footnotesize $5$};
	
	\draw[fill=black] (-.75,1) circle(2pt) node[anchor=west]{\footnotesize $2$};
	\draw[fill=black] (-1.5,1) circle(2pt) node[anchor=west]{\footnotesize $1$};	
	
	\draw[black,thick](0,2)--(0,-1);
	
	\draw[black,thick](0,0)--(-.75,-1);
	\draw[black,thick](0,2)--(-1.5,1);
	\draw[black,thick](0,2)--(-.75,1);
	\end{tikzpicture}
	\hspace{1.5cm}
	\begin{tikzpicture}
	\draw[fill=black] (0,2) circle(2pt) node[anchor=west]{\footnotesize $3$};
	\draw[fill=black] (0,1) circle(2pt) node[anchor=west]{\footnotesize $4$};
	\draw[fill=black] (0,0) circle(2pt) node[anchor=west]{\footnotesize $7$};
	\draw[fill=black] (0,-1) circle(2pt) node[anchor=west]{\footnotesize $6$};

	\draw[fill=black] (-.75,-1) circle(2pt) node[anchor=west]{\footnotesize $5$};
	
	\draw[fill=black] (-.75,1) circle(2pt) node[anchor=west]{\footnotesize $2$};
	\draw[fill=black] (-1.5,1) circle(2pt) node[anchor=west]{\footnotesize $1$};	
	
	\draw[black,thick](0,2)--(0,-1);
	
	\draw[black,thick](0,0)--(-.75,-1);
	\draw[black,thick](0,2)--(-1.5,1);
	\draw[black,thick](0,2)--(-.75,1);
	\end{tikzpicture}
\caption{Two $\BPS$-trees for $n=7$ from the increases sequences $I_1=\{3<4<6<7\}$ and $I_2=\{3<4<7\}$.
Alternatively, these are the two trees from the set $\{3,4\}\subset[5]$.}	
\label{fig:PSTrees}
\end{figure}

\begin{remark}
We conclude our discussion by noting that our computation produces an \emph{alternative $q$-analogue} of ${{n-1} \choose \ell}$, namely
	\[
	{{n-2}\choose {\ell-1}} q^{\frac{\ell^2+\ell}{2}} +{{n-2}\choose {\ell}} q^{\frac{\ell^2+5\ell+2}{2}}.
	\] 
This of course reduces to ${{n-1}\choose \ell}$ when $q=1$ and arises quite naturally from generalizing the major index statistic.
However, this is not the usual $q$-analogue of a binomial coefficient which arises in many natural ways, such as bit string inversions and lattice path areas. 	
\end{remark}

%
%


\bibliographystyle{plain}
\bibliography{MVGPBib}

\end{document}